\newtheorem{theorem}{Theorem}[section]
\newtheorem{lemma}[theorem]{Lemma}
\newtheorem{conjecture}[theorem]{Conjecture}
\theoremstyle{definition}
\tikzset{  
	-stealth,auto,node distance =0.8 cm and 1 cm, thin, 
	state/.style ={circle, draw, inner sep=0.2pt}, 
	point/.style = {circle, draw, inner sep=0.18cm, fill, node contents={}},  
	el/.style = {inner sep=2pt, align=right, sloped}  
}
\tiny\color{gray}, 
\title{Arc-disjoint in- and out-branchings in semicomplete split digraphs}
\date{\today}
\author{Jiangdong Ai\thanks{Corresponding author. School of Mathematical Sciences and LPMC, Nankai University, Tianjin 300071, P.R.
China. Email: jd@nankai.edu.cn. Partially supported
by the Fundamental Research Funds for the Central Universities, Nankai University. },~ Yiming Hao\thanks{School of Mathematical Sciences and LPMC, Nankai University, Tianjin 300071, P.R.
China. Email: 1120230031@mail.nankai.edu.cn.},~ Zhaoxiang Li\thanks{School of Mathematical Sciences and LPMC, Nankai University, Tianjin 300071, P.R.
China. Email: ZhaoxiangLi@mail.nankai.edu.cn.},~ Qi Shao\thanks{Center for Combinatorics, Nankai University, Tianjin 300071, P.R China. Email: Shaoqi@mail.nankai.edu.cn.}}
\begin{document}
	
	\maketitle
 \begin{abstract}
 An \emph{out-tree (in-tree)} is an oriented tree where every vertex except one, called the \emph{root}, has in-degree (out-degree) one. An \emph{out-branching $B^+_u$ (in-branching $B^-_u$)} of a digraph $D$ is a spanning out-tree (in-tree) rooted at $u$. A \emph{good $(u,v)$-pair} in $D$ is a pair of branchings $B^+_u, B^-_v$ which are arc-disjoint. Thomassen proved that deciding whether a digraph has any good pair is NP-complete. A \emph{semicomplete split digraph} is a digraph where the vertex set is the disjoint union of two non-empty sets, $V_1$ and $V_2$, such that $V_1$ is an independent set, the subdigraph induced by $V_2$ is semicomplete, and every vertex in $V_1$ is adjacent to every vertex in $V_2$. In this paper, we prove that every $2$-arc-strong semicomplete split digraph $D$ contains a good $(u, v)$-pair for any choice of vertices $u, v$ of $D$, thereby confirming a conjecture by Bang-Jensen and Wang [Bang-Jensen and Wang, J. Graph Theory, 2024].
 \end{abstract}

\section{Introduction}

The notation follows \cite{ai2024}, so we only repeat a few definitions here. A digraph is not allowed to have parallel arcs or loops. A directed multigraph can have parallel arcs but no loops. A directed multigraph is called \emph{semicomplete} if there are no pair of non-adjacent vertices. A directed multigraph $D$ is \emph{$k$-arc-strong} if $D\setminus{}A^{\prime}$ remains strong for every subset $A^{\prime} \subseteq A(D)$ of size at most $k - 1$.

An \emph{out‐branching} (resp., \emph{in‐branching}) of $D$ is a spanning oriented tree in which every vertex except one, called the root, has in-degree (resp., out-degree) one in $D$. A \emph{good $(u,v)$-pair} in $D$ is a pair of arc-disjoint out-branching and in-branching rooted at $u$ and $v$ respectively. We say a vertex pair $(u,v)$ is a \emph{good vertex pair} if there is a good $(u,v)$-pair in $D$. Thomassen \cite{thomassen1989} proposed the following conjecture.

\begin{conjecture}\cite{thomassen1989}\label{conj:CTbranch}
    There exists an integer $K$ such that every $K$-arc-strong digraph $D=(V,A)$   has a good $(u,v)$-pair for every choice of $u,v\in V$.
\end{conjecture}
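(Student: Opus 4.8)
The plan is to prove the existence of a universal $K$ by reformulating the good-pair problem as an arc-coloring problem and then attacking it with splitting-off and augmentation techniques, using the abundance of arc-disjoint branchings guaranteed by large arc-strong connectivity. First I would pass to the reverse digraph $\overleftarrow{D}$: an in-branching $B^-_v$ of $D$ is exactly an out-branching of $\overleftarrow{D}$ rooted at $v$, so a good $(u,v)$-pair is a pair of out-branchings, one in $D$ rooted at $u$ and one in $\overleftarrow{D}$ rooted at $v$, that use disjoint arcs of the underlying arc set $A$. Equivalently, I seek a $2$-coloring $A = A_{\mathrm{blue}}\sqcup A_{\mathrm{red}}$ such that $A_{\mathrm{blue}}$ admits an out-branching from $u$ and $A_{\mathrm{red}}$ admits an in-branching from $v$. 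By Edmonds' branching theorem this is the requirement that $d^-_{A_{\mathrm{blue}}}(X)\ge 1$ for every nonempty $X\subseteq V\setminus\{u\}$ and $d^+_{A_{\mathrm{red}}}(X)\ge 1$ for every nonempty $X\subseteq V\setminus\{v\}$.

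The next step is to exploit that $K$-arc-strong connectivity gives $d^-_D(X)\ge K$ and $d^+_D(X)\ge K$ for all proper nonempty $X$, so there is a large surplus to be split between the two colors. I would try to reduce the instance by directed splitting-off (Mader/Frank): repeatedly split off pairs of arcs at vertices other than $u$ and $v$ while preserving enough local arc-connectivity among $u$, $v$ and the remaining cut structure, driving toward a minimal counterexample on few vertices or with a controlled degree sequence, on which the coloring can be found by hand or by a finite check. In parallel I would aim to reserve a sparse in-branching $B^-_v$ that crosses each directed cut few times (for instance a breadth-first in-tree toward $v$), and prove that deleting its arcs leaves $u$-rooted arc-connectivity at least $1$; here the bound $d^-_D(X)\ge K$ must dominate the number of reserved arcs entering $X$ uniformly over all $X$.

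The hard part, and the reason the general conjecture remains open, is precisely this uniformity. Averaging over the $K$ arc-disjoint in-branchings produced by Edmonds' theorem shows that for each fixed cut $X$ some in-branching enters $X$ at most $d^-_D(X)/K$ times, but it does not produce a single in-branching that is simultaneously economical on every cut; the two color requirements constrain in-degrees at $u$ and out-degrees at $v$ in complementary classes, so they are not governed by one submodular function and matroid-union/Edmonds-type arguments do not apply. This non-submodular coupling is exactly what makes the decision problem NP-complete, so a purely connectivity-based argument cannot suffice and a new ingredient is needed. My intended ingredient is an iterative augmentation in the spirit of Lovász's proof of Edmonds' theorem: start from any out-branching/in-branching pair, and while some cut is violated in one color, perform an arc-exchange along an augmenting walk that repairs it; the obstacle to overcome is to equip this process with a bounded potential or a structural invariant guaranteeing termination despite the worst-case hardness, which I expect to be possible only once $K$ is large enough to keep every cut strictly in surplus throughout.

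Finally, as a route that leverages the present paper, I would try a composition strategy: establish the theorem on structured building blocks where the surplus can be managed explicitly—semicomplete split digraphs as handled here, and more generally digraphs admitting a nice decomposition such as quasi-transitive or locally semicomplete ones—and then prove a gluing lemma that assembles a good $(u,v)$-pair of a general $K$-arc-strong digraph from good pairs of the blocks in a canonical decomposition. The crux again migrates to the gluing step, where the two branchings must be stitched across the decomposition without creating a shared arc, but confining the genuinely non-submodular interaction to the boundary between blocks is, I expect, where the universal constant $K$ can finally be pinned down.
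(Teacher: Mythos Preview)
The statement you are addressing is not a theorem proved in the paper; it is Thomassen's conjecture, quoted from \cite{thomassen1989} and still open. The paper does not attempt to prove it and offers no argument for it: the conjecture is recalled only as motivation, and the paper's actual contribution is the much more restricted Theorem~\ref{thm:main}, which verifies Conjecture~\ref{conj:main} for semicomplete split digraphs with $K=2$ by a finite case analysis built on the classification in \cite{ai2024}. So there is no ``paper's own proof'' to compare against.

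Your proposal is not a proof either, and you essentially say so yourself. The reformulation via Edmonds' theorem and the $2$-coloring of $A$ is correct and standard, but from that point on every step is a plan rather than an argument: the splitting-off reduction does not come with an invariant that survives the non-submodular coupling you identify; the ``sparse in-branching'' idea runs into exactly the uniformity obstruction you describe; the iterative augmentation has no potential function; and the composition strategy lacks a gluing lemma. You correctly diagnose why each of these is hard, but diagnosing the obstacle is not the same as overcoming it. As written, the proposal is a survey of plausible attack lines on an open problem, not a proof attempt that can be evaluated for correctness.
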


A \emph{strong arc decomposition} of a (multi-)digraph $D=(V,A)$ is a partition of its arc set $A$ into two disjoint arc sets $A_1$ and $A_2$ such that both of the spanning subdigraphs $D_1=(V, A_1)$ and $D_2=(V, A_2)$ are strong. Clearly, if a digraph $D$ has a strong arc decomposition, then $D$ contains a good $(u,v)$-pair for every choice of $u,v\in V$. Therefore, the following conjecture by Bang-Jensen and Yeo would imply Conjecture~\ref{conj:CTbranch}.
	
\begin{conjecture}\cite{bangCOM24}
    There exists an integer $K$ such that every $K$-arc-strong digraph has a strong arc decomposition.
\end{conjecture}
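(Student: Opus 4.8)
The plan is to prove the conjecture in the following equivalent but more convenient packing form: it suffices to show that every $K$-arc-strong digraph $D=(V,A)$ contains \emph{two arc-disjoint spanning strong subdigraphs} $H_1,H_2$. Given such $H_1,H_2$, one obtains a strong arc decomposition by setting $A_2=A(H_2)$ and $A_1=A\setminus A(H_2)$; since $A_1\supseteq A(H_1)$ and adding arcs to a strong spanning subdigraph preserves strongness, both $(V,A_1)$ and $(V,A_2)$ are strong. So the entire problem reduces to showing that the maximum number of arc-disjoint spanning strong subdigraphs of $D$ is at least $2$ as soon as $\lambda(D)\ge K$. This is the natural strengthening of the ``good $(u,v)$-pair'' viewpoint already used in the excerpt.

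To build each $H_i$ I would use the characterisation that a spanning subdigraph is strong iff, for some fixed root $r$, it contains both an out-branching $B^+_r$ and an in-branching $B^-_r$. Hence it is enough to find, for a single root $r$, two out-branchings and two in-branchings such that the arcs of $B^{+,1}_r\cup B^{-,1}_r$ are disjoint from those of $B^{+,2}_r\cup B^{-,2}_r$, where the out- and in-branching \emph{within the same group} are allowed to overlap. The first ingredient is Edmonds' arc-disjoint branchings theorem, which guarantees $k$ arc-disjoint out-branchings (and, separately, $k$ arc-disjoint in-branchings) from $r$ in any $k$-arc-strong digraph. The real task is to produce an out-family and an in-family that are \emph{jointly} arc-disjoint across the two groups; for this I would try to invoke the submodular-flow and matroid framework of Frank for packing mixed in- and out-branchings, aiming to show that $\lambda(D)\ge K$ forces the relevant intersecting-supermodular covering inequalities.

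The step I expect to be the main obstacle is exactly this joint packing of in- and out-branchings. Packing out-branchings alone, or in-branchings alone, obeys a clean min--max; but the two families interact, and Thomassen's NP-completeness result (cited in the excerpt) shows that even deciding the existence of a \emph{single} arc-disjoint out-/in-branching pair is hard in general. High arc-strong connectivity must therefore be the tool that bypasses this hardness, and at present there is no min--max theorem controlling the packing number of strong spanning subdigraphs by $\lambda(D)$ alone. My fallback would be an inductive reduction on connectivity: peel off one strong spanning subdigraph $H_1$ that crosses every directed cut $X$ at most $\lambda(D)-1$ times in each direction, so that $D\setminus A(H_1)$ remains strong, and recurse. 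The key lemma here is the existence of such a \emph{bounded-crossing} strong-connectivity certificate—one that never saturates a minimum cut—which is delicate precisely because a single branching can already enter a set $X$ as many as $|X|$ times, far exceeding $\lambda(D)-1$ when $\lambda(D)$ is small relative to $|V|$.

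To make the reduction clean I would first apply Mader's directed splitting-off theorem to pass to a minimally $K$-arc-strong digraph, where cut sizes and degrees are tightly controlled, or reduce to the Eulerian case where in- and out-degrees coincide and the out/in asymmetry that defeats branching packing partially disappears; a decomposition of the Eulerian reduction would then have to be lifted back. The determination of the exact value of $K$ (small arc-strong connectivity is known not to suffice, so $K\ge 3$) I would leave open, aiming first to establish that \emph{some} finite $K$ works via the packing/covering route above, and only afterwards to optimise the constant for the special classes, such as the semicomplete split digraphs treated in this paper, where the structure collapses the mixed-branching obstruction.
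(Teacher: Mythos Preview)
The statement you are trying to prove is not a theorem in the paper; it is stated there as an \emph{open conjecture} of Bang-Jensen and Yeo, with no proof given or claimed. The paper's actual contribution is the much more restricted Theorem~\ref{thm:main} about $2$-arc-strong semicomplete split digraphs, which it establishes by case analysis on the finite list of exceptional digraphs from Theorem~\ref{thm:2as}. So there is no ``paper's own proof'' to compare against here.

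Your proposal is not a proof but an outline of possible attack lines, and you yourself flag the genuine gap: the joint packing of in- and out-branchings into two arc-disjoint groups. Edmonds' theorem gives arc-disjoint out-branchings (or in-branchings) separately, but there is no known min--max or submodular-flow result that yields arc-disjoint \emph{strong} spanning subdigraphs from high arc-connectivity alone, and Thomassen's NP-completeness result already shows that local reasoning about a single out/in pair cannot succeed without exploiting $\lambda(D)$ globally. Your fallback---peeling off a strong spanning subdigraph that crosses every minimum cut with slack---is exactly the missing lemma; no such bounded-crossing certificate is known, and as you note a single branching can already saturate a cut by an amount unrelated to $\lambda(D)$. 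The Mader splitting-off and Eulerian reductions are standard moves, but neither is known to close this gap. In short, your write-up is a reasonable survey of where the difficulty lies, but it does not constitute a proof, and the conjecture remains open.
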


We can easily see that every digraph $D$ with a strong arc decomposition is 2-arc-strong. Then, asking if every 2-arc-strong digraph has a strong arc decomposition is natural. Unfortunately, the following digraphs provide a negative answer.

Bang-Jensen and Yeo~\cite{bangCOM24} proved that for a 2-arc-strong semicomplete digraph $D$, there is only one exception that does not have a strong arc decomposition, which is the digraph $S_4$ depicted in Figure \ref{fig-MD-exceptional}. Bang-Jensen, Gutin and Yeo~\cite{bangJGT95} generalized the above result to semicomplete multi-digraphs with six additional exceptions, see Figure~\ref{fig-MD-exceptional}.

	\begin{theorem}\footnote{This is a modified version, the authors of \cite{bangJGT95} missed $S_{4,4}, S_{4,5}$ and $S_{4,6}$.}\cite{bangJGT95}\label{thm:multi semi SAD}
		A 2-arc-strong semicomplete multi-digraph $D=(V,A)$ on $n$ vertices has a strong arc decomposition if and only if $D$ is not isomorphic to one of the exceptional digraphs depicted in Figure \ref{fig-MD-exceptional}. Furthermore, a strong arc decomposition of $D$ can be obtained in polynomial time when it exists.	
	\end{theorem}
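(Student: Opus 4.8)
I would prove the two directions separately: \emph{necessity} by a direct check on the finitely many exceptional digraphs, and \emph{sufficiency} by induction on $|A(D)|$, built around deleting a parallel arc or a vertex and then restoring it. For necessity, observe that every digraph in Figure~\ref{fig-MD-exceptional} has order $4$ and only a few arcs; since a strong spanning subdigraph on $n$ vertices has at least $n$ arcs, in any candidate decomposition $A=A_1\cup A_2$ both $(V,A_1)$ and $(V,A_2)$ must have at least $4$ arcs, so each is a Hamiltonian cycle possibly plus a single extra arc. Enumerating the few Hamiltonian cycles of each exception and checking that no complementary pair consists of two strong subdigraphs finishes this direction; this is the verification made in \cite{bangJGT95} for $S_4,S_{4,1},S_{4,2},S_{4,3}$ and, as the footnote records, it must also be carried out for $S_{4,4},S_{4,5},S_{4,6}$.

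For sufficiency, suppose some arc $uv$ has multiplicity at least $3$. Deleting one copy yields a $2$-arc-strong semicomplete multi-digraph $D'$, since a directed cut separating $u$ from $v$ still carries at least two arcs and no other cut is affected; unless $D'$ is one of the finitely many exceptions (a situation to be inspected by hand, since $D'$ then has order $4$ while $D$ is not itself exceptional), the inductive hypothesis gives a strong arc decomposition of $D'$, and returning the deleted copy of $uv$ to either side keeps both parts strong and spanning. Hence we may assume every arc has multiplicity at most $2$; in particular, for each fixed $n$ only finitely many $D$ remain, so the base cases $n\le 4$ of the induction are a finite check, and they yield precisely the listed exceptions.

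For $n\ge 5$ the core step is to find a vertex $v$ such that $D-v$ is still $2$-arc-strong. Applying the inductive hypothesis to $D-v$ (which, being semicomplete on at least $4$ vertices, is either non-exceptional or one of the few exceptions, the latter case handled directly) produces a strong arc decomposition $A(D-v)=A_1'\cup A_2'$. Since $D$ is $2$-arc-strong it has $d^+(v)\ge 2$ and $d^-(v)\ge 2$, so we may colour the arcs at $v$ with two colours so that each colour class contains at least one in-arc and at least one out-arc at $v$. Setting $A_i=A_i'\cup\{\text{colour-}i\text{ arcs at }v\}$ makes each $(V,A_i)$ strong: $(V\setminus v,A_i')$ is strong and spanning, $v$ has an out-neighbour $x$ and an in-neighbour $y$ inside $(V,A_i)$, and a path from $x$ to $y$ in $(V\setminus v,A_i')$ closes a walk through $v$ reaching all of $V$. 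This is the required decomposition, and all steps — locating $v$, invoking the smaller decomposition, splitting $v$'s arcs — run in polynomial time, giving the algorithmic statement.

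The \textbf{main obstacle} is the existence of the removable vertex $v$ for $n\ge 5$: one must show that a $2$-arc-strong semicomplete multi-digraph on at least five vertices cannot have every vertex ``critical'', apart from a short list of small configurations treated by hand. I would attack this through the family of tight directed cuts: if no $v$ is removable, then for each $v$ there is a set $S_v$ with $v\notin S_v$, $d^+(S_v)=2$, and at least one arc of this cut entering $v$ (or the in-neighbour analogue), and an uncrossing/submodularity argument on $\{S_v\}$, combined with semicompleteness (every two parts are joined by an arc), should bound $|V|$. A more conservative route is to quote the parallel-arc-free case of \cite{bangCOM24} and analyse only how multiplicity-$2$ arcs interact with it, which localizes the combinatorics. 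In either approach the delicate point — and the likely source of the three exceptions $S_{4,4},S_{4,5},S_{4,6}$ missed in \cite{bangJGT95} — is ensuring that the induction never silently passes through an exceptional $D-v$ or an exceptional $D'$, so that every small obstruction is accounted for.
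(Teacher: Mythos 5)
First, note that the paper you are working from does not prove this statement at all: Theorem~\ref{thm:multi semi SAD} is imported from \cite{bangJGT95} (with the footnote only correcting the list of exceptions by adding $S_{4,4}$, $S_{4,5}$, $S_{4,6}$), and it is then used as a black box. So there is no in-paper argument to compare yours with; the relevant comparison is with the proofs in \cite{bangCOM24} and \cite{bangJGT95}, and those do not follow your route.

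Your proposal has a genuine gap exactly where you place the ``main obstacle'': the sufficiency direction stands or falls with the claim that every $2$-arc-strong semicomplete multi-digraph on $n\ge 5$ vertices has a vertex $v$ with $D-v$ still $2$-arc-strong (up to a short list of hand-checkable configurations). You do not prove this, you only sketch two possible attacks (uncrossing tight cuts, or leaning on the digraph case of \cite{bangCOM24}), and this claim is essentially the whole content of the theorem: a vertex $v$ is non-removable as soon as some set $S\not\ni v$ has out- or in-degree $2$ with one of those arcs incident to $v$, and ruling out that \emph{every} vertex is blocked in this way is a substantial structural statement, not a routine counting step. The published proofs do not perform such a vertex-deletion induction; the semicomplete case in \cite{bangCOM24} is a long argument built on arc-disjoint branchings (Edmonds' branching theorem) and a detailed analysis of vertices of small in-/out-degree, and the multi-digraph extension in \cite{bangJGT95} is derived from that machinery rather than re-proved by deleting vertices or parallel arcs. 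Two further soft spots: (a) every place where you write ``handled directly'' (the cases where $D'$ or $D-v$ is itself exceptional) is a nontrivial finite analysis, and it is precisely this kind of boundary bookkeeping that produced the three exceptions missed in \cite{bangJGT95}, so it cannot be waved off; (b) the polynomial-time assertion needs the removable vertex (or a certificate for the alternative branch of your dichotomy) to be found efficiently, which you assert but do not argue. The necessity direction and the multiplicity-$\ge 3$ reduction are fine, but they are the easy parts; as it stands the proposal is an outline of an unproved induction rather than a proof.
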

	\begin{figure}[H]
		\centering
		\subfigure{\begin{minipage}[t]{0.23\linewidth}
				\centering\begin{tikzpicture}[scale=0.8]
					\filldraw[black](0,0) circle (3pt)node[label=left:$v_1$](v1){};
					\filldraw[black](2,0) circle (3pt)node[label=right:$v_2$](v2){};
					\filldraw[black](0,-2) circle (3pt)node[label=left:$v_3$](v3){};
					\filldraw[black](2,-2) circle (3pt)node[label=right:$v_4$](v4){};
					\foreach \i/\j/\t in {
						v1/v2/0,
						v2/v3/0,
						v3/v4/0,
						v4/v1/0,
						v1/v3/15,
						v2/v4/15,
						v3/v1/15,
						v4/v2/15
					}{\path[draw, line width=0.8] (\i) edge[bend left=\t] (\j);}			
				\end{tikzpicture}\caption*{$S_4$}\end{minipage}}
		\subfigure{\begin{minipage}[t]{0.23\linewidth}
				\centering\begin{tikzpicture}[scale=0.8]
					\filldraw[black](0,0) circle (3pt)node[label=left:$v_1$](v1){};
					\filldraw[black](2,0) circle (3pt)node[label=right:$v_2$](v2){};
					\filldraw[black](0,-2) circle (3pt)node[label=left:$v_3$](v3){};
					\filldraw[black](2,-2) circle (3pt)node[label=right:$v_4$](v4){};
					\foreach \i/\j/\t in {
						v1/v2/0,
						v2/v3/0,
						v3/v4/0,
						v4/v1/0,
						v1/v3/15,
						v2/v4/15,
						v3/v1/10,
						v1/v3/-30,
						v4/v2/15
					}{\path[draw, line width=0.8] (\i) edge[bend left=\t] (\j);}	
				\end{tikzpicture}\caption*{$S_{4,1}$}\end{minipage}}
		\subfigure{\begin{minipage}[t]{0.23\linewidth}
				\centering\begin{tikzpicture}[scale=0.8]
					\filldraw[black](0,0) circle (3pt)node[label=left:$v_1$](v1){};
					\filldraw[black](2,0) circle (3pt)node[label=right:$v_2$](v2){};
					\filldraw[black](0,-2) circle (3pt)node[label=left:$v_3$](v3){};
					\filldraw[black](2,-2) circle (3pt)node[label=right:$v_4$](v4){};
					\foreach \i/\j/\t in {
						v1/v2/0,
						v1/v2/20,
						v2/v3/0,
						v3/v4/0,
						v4/v1/0,
						v1/v3/15,
						v2/v4/15,
						v3/v1/15,
						v4/v2/15
					}{\path[draw, line width=0.8] (\i) edge[bend left=\t] (\j);}	
				\end{tikzpicture}\caption*{$S_{4,2}$}\end{minipage}}
		\subfigure{\begin{minipage}[t]{0.23\linewidth}
				\centering\begin{tikzpicture}[scale=0.8]
					\filldraw[black](0,0) circle (3pt)node[label=left:$v_1$](v1){};
					\filldraw[black](2,0) circle (3pt)node[label=right:$v_2$](v2){};
					\filldraw[black](0,-2) circle (3pt)node[label=left:$v_3$](v3){};
					\filldraw[black](2,-2) circle (3pt)node[label=right:$v_4$](v4){};
					\foreach \i/\j/\t in {
						v1/v2/0,
						v2/v3/0,
						v3/v4/0,
						v4/v1/0,
						v1/v3/15,
						v2/v4/10,
						v4/v2/-30,
						v3/v1/10,
						v1/v3/-30,
						v4/v2/15
					}{\path[draw, line width=0.8] (\i) edge[bend left=\t] (\j);}	
				\end{tikzpicture}\caption*{$S_{4,3}$}\end{minipage}}
		
		\subfigure{\begin{minipage}[t]{0.23\linewidth}
				\centering\begin{tikzpicture}[scale=0.8]
					\filldraw[black](0,0) circle (3pt)node[label=left:$v_1$](v1){};
					\filldraw[black](2,0) circle (3pt)node[label=right:$v_2$](v2){};
					\filldraw[black](0,-2) circle (3pt)node[label=left:$v_3$](v3){};
					\filldraw[black](2,-2) circle (3pt)node[label=right:$v_4$](v4){};
					\foreach \i/\j/\t in {
						v1/v2/0,
						v1/v2/20,
						v2/v3/0,
						v3/v4/0,
						v4/v1/0,
						v1/v3/15,
						v2/v4/15,
						v3/v1/10,
						v1/v3/-30,
						v4/v2/15
					}{\path[draw, line width=0.8] (\i) edge[bend left=\t] (\j);}	
				\end{tikzpicture}\caption*{$S_{4,4}$}\end{minipage}}
		\subfigure{\begin{minipage}[t]{0.23\linewidth}
				\centering\begin{tikzpicture}[scale=0.8]
					\filldraw[black](0,0) circle (3pt)node[label=left:$v_1$](v1){};
					\filldraw[black](2,0) circle (3pt)node[label=right:$v_2$](v2){};
					\filldraw[black](0,-2) circle (3pt)node[label=left:$v_3$](v3){};
					\filldraw[black](2,-2) circle (3pt)node[label=right:$v_4$](v4){};
					\foreach \i/\j/\t in {
						v1/v2/0,
						v1/v2/20,
						v2/v3/0,
						v3/v4/0,
						v4/v1/0,
						v1/v3/15,
						v2/v4/10,
						v3/v1/15,
						v4/v2/-30,
						v4/v2/15
					}{\path[draw, line width=0.8] (\i) edge[bend left=\t] (\j);}	
				\end{tikzpicture}\caption*{$S_{4,5}$}\end{minipage}}
		\subfigure{\begin{minipage}[t]{0.23\linewidth}
				\centering\begin{tikzpicture}[scale=0.8]
					\filldraw[black](0,0) circle (3pt)node[label=left:$v_1$](v1){};
					\filldraw[black](2,0) circle (3pt)node[label=right:$v_2$](v2){};
					\filldraw[black](0,-2) circle (3pt)node[label=left:$v_3$](v3){};
					\filldraw[black](2,-2) circle (3pt)node[label=right:$v_4$](v4){};
					\foreach \i/\j/\t in {
						v1/v2/0,
						v2/v3/0,
						v3/v4/0,
						v4/v1/0,
						v1/v3/15,
						v2/v4/10,
						v4/v2/-30,
						v3/v1/10,
						v1/v3/-30,
						v4/v2/15,
						v1/v2/20
					}{\path[draw, line width=0.8] (\i) edge[bend left=\t] (\j);}	
				\end{tikzpicture}\caption*{$S_{4,6}$}\end{minipage}}

		\caption{2-arc-strong multi-digraphs without strong arc decompositions.}
		\label{fig-MD-exceptional}
	\end{figure}
	Subsequently, Bang-Jensen and Huang~\cite{bangJCTB102} extended semicomplete digraphs to locally semicomplete digraphs. Bang-Jensen, Gutin and Yeo \cite{bangJGT95} considered the strong 
	arc decomposition of semicomplete composition and solved it completely.

    A \emph{split digraph} $D$ is a digraph whose vertex set is a disjoint union of two non-empty sets $V_1$ and $V_2$ such that $V_1$ is an independent set, the subdigraph induced by $V_2$ is semicomplete, and we denote it as $D=\left(V_1, V_2; A\right)$. Additionally, if every vertex in $V_1$ is adjacent to every vertex in $V_2$, we call such a split digraph \emph{semicomplete split digraph}. Recently, Bang-Jensen and Wang \cite{bang2024} explored the strong arc decomposition of split digraphs. Their main result is the following:
	\begin{theorem}\cite{bang2024}
		Let $D=\left(V_1, V_2; A\right)$ be a 2-arc-strong split digraph such that $V_1$ is an independent set and the subdigraph induced by $V_2$ is semicomplete. If every vertex of $V_1$ has both out- and in-degree at least 3 in $D$, then $D$ has a strong arc decomposition.
	\end{theorem}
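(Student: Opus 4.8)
The plan is to reduce the statement to the already-settled semicomplete multi-digraph case, Theorem~\ref{thm:multi semi SAD}, by \emph{contracting away} the independent set $V_1$. Since $V_1$ is independent, every arc incident with a vertex $w\in V_1$ joins $w$ to $V_2$, so $w$ can be \emph{bypassed}: choose two arc-disjoint length-two paths $a_1wb_1$ and $a_2wb_2$ through $w$ with $a_1\ne a_2$ and $b_1\ne b_2$, delete $w$ together with all arcs at $w$, and add the two ``shortcut'' arcs $a_1b_1$ and $a_2b_2$ inside $V_2$. Performing this for every $w\in V_1$ produces a multi-digraph $H$ on vertex set $V_2$, and since $D[V_2]$ is already semicomplete and we have only added arcs, $H$ is a semicomplete multi-digraph. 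The hypothesis $d^+_D(w),d^-_D(w)\ge 3$ enters already here: an easy matching argument shows the bypasses can always be selected with $a_i\ne b_i$ (so no loop appears) and with at least one in-arc and at least one out-arc of $w$ left unused; it also forces $|V_2|\ge 3$, so no degenerate tiny case arises.

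The heart of the argument, and the step I expect to be the main obstacle, is to choose the bypasses so that $H$ is $2$-arc-strong. This is a directed splitting-off problem: replacing a path $awb$ by the arc $ab$ can reduce $2$-arc-connectivity only through a ``tight'' cut of $D$, i.e.\ an in-cut or out-cut of size exactly $2$, that separates $w$ from $a$ or from $b$; hence it suffices to pick, for each $w$, bypasses that cross every such tight cut in the correct direction. The existence of an admissible choice is a Mader-type statement, but the classical complete-splitting theorems do not apply verbatim because the in- and out-degree of $w$ need not be equal; the delicate point is therefore a direct analysis of the family of tight cuts through $w$, a local flow/cut computation, and this is where I expect the assumption $d^\pm_D(w)\ge 3$ (rather than merely $\ge 2$) to be essential. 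I would process the vertices of $V_1$ one at a time, maintaining ``$2$-arc-strong'' as an invariant of the partially contracted digraph.

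One must also rule out the possibility that the resulting semicomplete multi-digraph $H$ is isomorphic to one of the exceptional digraphs of Figure~\ref{fig-MD-exceptional}. All of these have exactly four vertices, so this can happen only when $|V_2|=4$; in that case every $w\in V_1$ is adjacent, in both directions, to at least three of the four vertices of $V_2$, which leaves plenty of room to re-route a bypass — or, for a single offending $w$, to add a third bypass, which is affordable since $d^\pm_D(w)\ge 3$ — so as to land on a non-exceptional $H$. Only boundedly many configurations can survive all such modifications, and these can be disposed of by direct inspection.

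With $H$ both $2$-arc-strong and non-exceptional, Theorem~\ref{thm:multi semi SAD} supplies a strong arc decomposition $(H_1,H_2)$ of $H$, and it remains to \emph{un-contract}. Assign each arc of $D[V_2]$ to whichever of $D_1,D_2$ received it in $H$; for each shortcut arc $a_ib_i$, place the two original arcs $a_iw$ and $wb_i$ into the part containing $a_ib_i$; and distribute the remaining arcs of $D$ arbitrarily. Since subdividing an arc preserves strong connectivity, each $D_j$ is strong provided every $w\in V_1$ has positive in- and out-degree in it; this is automatic in a part that received a bypass of $w$, and if both bypasses of some $w$ landed in the same part, one simply adds to the other part an unused in-arc and an unused out-arc of $w$ (these exist by the degree hypothesis), which preserves strongness because adding arcs to a strong digraph leaves it strong. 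Therefore $(A(D_1),A(D_2))$ is a strong arc decomposition of $D$, as required.
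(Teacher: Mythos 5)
First, note that the statement you are proving is quoted in this paper from Bang-Jensen and Wang \cite{bang2024}; the paper under review gives no proof of it, so there is nothing internal to compare your argument against, and it must be judged on its own. Judged that way, it has a genuine gap exactly where you yourself locate ``the heart of the argument'': you never establish that the bypasses can be chosen so that the resulting semicomplete multi-digraph $H$ on $V_2$ is $2$-arc-strong. Your operation is not a standard splitting-off: for each $w\in V_1$ you delete \emph{all} arcs at $w$ (at least six of them) and reinstate only two shortcut arcs, so arc-connectivity between subsets of $V_2$ can certainly drop, and the classical Mader-type theorems do not apply since $d^+(w)\neq d^-(w)$ in general and you are not performing a complete splitting. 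The assertion that it suffices to ``cross every tight cut in the correct direction,'' and that an admissible pair of bypasses always exists when $d^\pm_D(w)\ge 3$, is precisely the nontrivial lemma the whole reduction hinges on; you state it as an expectation, not a proof, and you also give no argument that the invariant survives processing the vertices of $V_1$ one at a time (after the first contraction the tight-cut structure changes, and nothing is said about why a later vertex cannot be forced into an inadmissible position). Without this lemma the reduction to Theorem~\ref{thm:multi semi SAD} does not get off the ground.

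Two further points are asserted rather than proved, though they are more repairable. The claim that one can always steer $H$ away from the exceptional digraphs of Figure~\ref{fig-MD-exceptional} when $|V_2|=4$ (``re-route a bypass,'' ``add a third bypass,'' ``disposed of by direct inspection'') is a sketch of a case analysis that is never carried out; since the exceptions have bounded size and multiplicity this is plausible when $|V_1|$ is large, but the small configurations are exactly where counterexamples to such theorems live (compare the exceptional families in Theorem~\ref{thm:2as}), so they cannot be waved away. By contrast, your un-contraction step is essentially sound: subdividing a shortcut back into $a_iw,wb_i$ preserves strongness, and when both shortcuts of some $w$ land in the same part, the hypothesis $d^\pm_D(w)\ge 3$ does leave an unused in-arc and out-arc to repair the other part. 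So the architecture (contract to a semicomplete multi-digraph, apply Theorem~\ref{thm:multi semi SAD}, lift back) is reasonable, but as written the proposal defers, rather than solves, the connectivity-preserving contraction that constitutes the actual difficulty of the theorem.
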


Later, Ai, He, Li, Qin and Wang~\cite{ai2024} provided a complete characterization of whether a split digraph has a strong arc decomposition.

\begin{theorem}\cite{ai2024}\label{thm:2as}
    A 2-arc-strong split digraph $D=(V_1,V_2;A)$ has a strong arc decomposition if and only if $D$  is not isomorphic to any of the digraphs illustrated as follows, or their arc-reversed versions (reverse all arcs). 
    \begin{enumerate}
    \renewcommand{\theenumi}{\textbf{(\arabic{enumi})}}
        \item\label{ce1} There are $x_1,x_2,u\in V(D)$ such that $N_D^+(u)=\{x_1,x_3\},$ $N_D^-(u)=\{x_1,x_2\},$ $N_D^+(x_1)=\{x_2,u\}, N_D^+(x_2)=\{v,u\},$ where $x_3,v\in V(D)\backslash\{x_1,x_2,u\}$ and $x_3,v$ can be the same.

        \item\label{ce2} There exist $a,b,c \in V_2, u,v\in V_1$, such that $N_D^+(b)=\{u,v,c\}, N_D^+(c)=\{v,a\}, N_D^+(a)=\{u,b\}, N_D^+(u)=\{a,u^+\}, N_D^+(v)=\{b,v^+\}$, where $u^+,v^+\in V_2\setminus \{a,b,c\}$ and $u^+,v^+$ can be the same, besides, $N_D^-(u)=\{a,b\}, N_D^-(v)=\{b,c\}$.

        \item\label{ce3} The Appendix in~\cite{ai2024}.
    \end{enumerate}   
\end{theorem}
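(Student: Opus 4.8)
The plan is to prove the two directions separately, with the sufficiency (``if'') direction being the substantial part; I treat necessity first because it is a finite check.

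For necessity I would verify directly that each of the three exceptional families~\ref{ce1}, \ref{ce2}, \ref{ce3}, and their arc-reversals, admits no strong arc decomposition. Each family is pinned down by a local neighbourhood configuration around a few low-degree vertices, so the obstruction should be local. In any candidate partition $A = A_1 \cup A_2$ with both $D_i = (V, A_i)$ strong, every vertex must have in-degree $\geq 1$ and out-degree $\geq 1$ in each $D_i$; for a vertex of total degree $4$ (in-degree $2$, out-degree $2$) this forces a one-to-one split of its in-arcs and of its out-arcs between $A_1$ and $A_2$. Propagating these forced splits through the prescribed neighbourhoods of $u,x_1,x_2$ in \ref{ce1} (resp. $a,b,c,u,v$ in \ref{ce2}) should yield a small arc set that is forced monochromatic in one $D_i$ yet is needed by the other $D_i$ for strongness, a contradiction. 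I expect this to be a fiddly but routine case check for each family.

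For sufficiency I would use the following reduction, the \emph{add-vertex step}. Since $V_1$ is independent, every $u \in V_1$ has all its neighbours in $V_2$, and $2$-arc-strongness forces $d^+_D(u), d^-_D(u) \geq 2$. Suppose we already have a strong arc decomposition $(D_1, D_2)$ of $D[C]$ for some core $C \supseteq V_2$. Then for any $u \in V_1 \setminus C$ I assign one out-arc and one in-arc of $u$ to each of $D_1, D_2$ (possible since the degrees are $\geq 2$) and distribute the rest arbitrarily; because $V_2 \subseteq C$ is already spanned by each strong $D_i$, the vertex $u$ can reach and be reached from all of $V_2$, hence from everything, so both parts remain strong. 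Thus the $V_1$-vertices outside the core can be inserted one at a time, and the whole problem reduces to decomposing a suitable small core $D[C]$ with $V_2 \subseteq C$. The base case is immediate when $D[V_2]$ is itself $2$-arc-strong and not one of the seven semicomplete exceptions of \Cref{thm:multi semi SAD}: that theorem decomposes $D[V_2]$, we take $C = V_2$, and then add all of $V_1$ by the add-vertex step.

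The difficulty is therefore concentrated in two situations. First, $D[V_2]$ may fail to be $2$-arc-strong: some $a \in V_2$ has, say, in-degree $\leq 1$ inside $V_2$ and relies on arcs from $V_1$, so its connectivity must be routed through $V_1$ in both parts. Here I would bring a minimal ``essential'' set $W \subseteq V_1$ into the core and decompose $D[V_2 \cup W]$, the natural tool being to form an auxiliary semicomplete multi-digraph on $V_2$ in which each $v \in W$ is replaced by shortcut arcs $a \to b$ for the length-two paths $a \to v \to b$ it carries (or is contracted into $V_2$), apply \Cref{thm:multi semi SAD}, and lift the decomposition back, ensuring each $v \in W$ gets an in- and an out-arc in both parts. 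Second, $D[V_2]$ may be one of the seven exceptions, all of which have $|V_2| = 4$, so the core is small and handled by direct analysis using help from $V_1$. The main obstacle is exactly this core analysis around low-degree $V_1$-vertices and exceptional semicomplete cores: when the shortcut/contraction reduction lands on one of the seven exceptions, or when several degree-$2$ vertices of $V_1$ share a small neighbourhood in $V_2$, the decomposition can genuinely fail, and the content of the theorem is that the only failures are precisely \ref{ce1}, \ref{ce2}, \ref{ce3} and their reverses. Matching every possible low-degree obstruction to this exact list — finding a valid routing through $V_1$ in all other cases while confirming the listed ones admit none — is where the bulk of the work lies; the hypothesis of Bang-Jensen and Wang~\cite{bang2024} (all $V_1$-vertices of in- and out-degree $\geq 3$) should appear as a clean sub-case that never produces an exception, isolating the degree-$2$ vertices as the sole source of the families above.
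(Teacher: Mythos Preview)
The theorem you are attempting to prove is not proved in this paper at all: it is quoted verbatim from \cite{ai2024} (Ai, He, Li, Qin and Wang) as background, and the present paper merely \emph{uses} it as the starting point for its main result (Theorem~\ref{thm:main}). Consequently there is no ``paper's own proof'' of Theorem~\ref{thm:2as} to compare your proposal against.

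As a standalone sketch your outline is reasonable in spirit --- reduce to a core containing $V_2$, handle the core via \Cref{thm:multi semi SAD} on an auxiliary semicomplete multi-digraph, and then re-insert the remaining $V_1$-vertices one at a time --- and your add-vertex step is sound since every $u\in V_1$ has all its neighbours in $V_2$. But you correctly identify that the entire content of the theorem lives in the ``core analysis'' paragraph, and there your proposal is only a statement of intent: you have not said how to organise the case analysis when the auxiliary multi-digraph hits one of the seven exceptions, nor how the three specific families \ref{ce1}, \ref{ce2}, \ref{ce3} emerge as the complete list of obstructions. That is the actual theorem, and your sketch does not yet supply it. To assess whether your strategy matches the original argument you would need to consult \cite{ai2024} directly.
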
   

			

			
\begin{figure}[H]
    \centering
    \hfill
    \subfigure{\begin{minipage}[t]{0.45\linewidth}
				\centering\begin{tikzpicture}[scale=0.3]
				
				\filldraw[black](15,0) circle (5pt)node[label=right:$x_1$](x1){};
				\filldraw[black](10,0) circle (5pt)node[label=above:{$x_2$}](x2){};
				\filldraw[black](6,-1.5) circle (5pt)node[label=above:{$v$}](v){};
				\filldraw[black](12.5,4.5) circle (5pt)node[label=right:{$u$}](u){};
				\filldraw[black](3,1.5) circle (5pt)node[label=above:{$x_3$}](x3){};
				
				\path[draw, bend left=30, line width=0.8pt] (x1) to (x2);
				\path[draw, bend left=30, line width=0.8pt] (x2) to (v);
				\path[draw, bend left=30, line width=0.8pt] (x1) to (u);
				\path[draw, line width=0.8pt] (x2) to (u);
				\path[draw, bend left=30, line width=0.8pt] (u) to (x1);	
				\path[draw, line width=0.8pt] (u) to (x3);	
				
			\end{tikzpicture}\caption*{An illustration of counterexample in \ref{ce1}}\end{minipage}}
   \hfill
   \subfigure{\begin{minipage}[t]{0.45\linewidth}
				\centering\begin{tikzpicture}[scale=0.3]

			\filldraw[black](16,0) circle (5pt)node[label=right:$c$](c){};
			\filldraw[black](12.5,3) circle (5pt)node[label=right:{$a$}](a){};
			\filldraw[black](12.5,-3) circle (5pt)node[label=right:{$b$}](b){};
			\filldraw[black](7,2) circle (5pt)node[label=above:{$u$}](u){};
			\filldraw[black](7,-2) circle (5pt)node[label=below:{$v$}](v){};
			\filldraw[black](3,2) circle (5pt)node[label=above:{$u^+$}](u+){};
			\filldraw[black](3,-2) circle (5pt)node[label=below:{$v^+$}](v+){};
			
			\path[draw, line width=0.8pt, bend left=15] (a) to (u);
			\path[draw, line width=0.8pt] (u) to (u+);
			\path[draw, line width=0.8pt] (c) to (v);
			\path[draw, line width=0.8pt, bend left=15] (v) to (b);
			
			\path[draw, line width=0.8pt] (a) to (b);
			\path[draw, line width=0.8pt] (b) to (u);
			\path[draw, line width=0.8pt, bend left=15] (u) to (a);
			\path[draw, line width=0.8pt] (v) to (v+);
			\path[draw, line width=0.8pt, bend left=15] (b) to (v);
			\path[draw, line width=0.8pt] (c) to (a);
			
			\path[draw, line width=0.8pt] (b) to (c);
		\end{tikzpicture}\caption*{An illustration of counterexample in \ref{ce2}}\end{minipage}}
    \hfill
    
    \label{fig:2 kinds of counterexamples}
\end{figure}
 Note that if a digraph $D$ contains a strong arc decomposition, it has a good $(u,v)$-pair for every choice of vertices $u, v\in D$. From Theorem~\ref{thm:2as}, we see the condition $2$-arc-strong is not enough to guarantee a strong arc decomposition for a split digraph, then what if a good $(u,v)$-pair for every choice of $u,v$?  In~\cite{bang2024}, Bang-Jensen and Wang proved that there are infinitely many $2$-arc-strong split digraphs that do not have good $(u,v)$-pairs for some choice of $u,v$. Now, it is natural to ask for semicomplete split digraphs, and they proposed the following conjecture.
\begin{conjecture}\cite{bang2024}\label{conj:main}
    Every 2-arc-strong semicomplete split digraph $D$ contains a good $(u,v)$-pair for every choice of vertices $u,v$ in $D$.
\end{conjecture}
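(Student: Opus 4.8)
The plan is to derive Conjecture~\ref{conj:main} from the split-digraph characterization in Theorem~\ref{thm:2as}. Recall that if a digraph $D$ has a strong arc decomposition $(A_1,A_2)$, then an out-branching of the strong digraph $(V,A_1)$ rooted at $u$, together with an in-branching of the strong digraph $(V,A_2)$ rooted at $v$, is a good $(u,v)$-pair; so the conclusion holds for every $2$-arc-strong semicomplete split digraph that admits a strong arc decomposition. By Theorem~\ref{thm:2as} it then remains only to handle those $D$ that are isomorphic to one of the exceptional digraphs of \ref{ce1}, \ref{ce2}, \ref{ce3}, or to an arc-reversal of one. Here it is convenient to note that reversing all arcs of $D$ turns a good $(u,v)$-pair into a good $(v,u)$-pair and maps semicomplete split digraphs to semicomplete split digraphs, so it is enough to deal with the un-reversed exceptional digraphs and then appeal to this symmetry.

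The second step is to determine which of the exceptional digraphs can actually occur as semicomplete split digraphs. This is where the semicompleteness hypothesis is used: every vertex of $V_2$ is adjacent to all other vertices, every vertex of $V_1$ is adjacent to every vertex of $V_2$, and by $2$-arc-strongness every vertex has in- and out-degree at least $2$. For the family in \ref{ce2}, one computes $N_D^+(u)\cup N_D^-(u)=\{a,b,u^+\}$, which does not contain $c\in V_2$, so the $V_1$-vertex $u$ and the $V_2$-vertex $c$ would be non-adjacent — impossible; hence \ref{ce2} contributes nothing. In \ref{ce1}, the vertex $u$ is adjacent to exactly the three vertices $x_1,x_2,x_3$, which forces $|V_2|\le 3$ (and $|V(D)|=4$ when $u\in V_2$); a short degree count — in particular verifying that no extra vertex can be added to $V_1$ without producing a vertex of in- or out-degree less than $2$ — then leaves, up to isomorphism and arc-reversal, only $S_4$ and one explicit digraph on five vertices. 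Running the same analysis over the digraphs in the appendix of \cite{ai2024} (case \ref{ce3}) should likewise leave only a short, explicit list of small digraphs.

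Finally, for each semicomplete split digraph $D$ on this finite list I would verify the statement directly: for every ordered pair $(u,v)$ of its vertices, write down an out-branching rooted at $u$ and an in-branching rooted at $v$ and check that they are arc-disjoint, using the automorphisms of these small digraphs to reduce the number of essentially different pairs to a handful. For instance, writing $S_4$ as the $4$-cycle $u\to x_3\to x_1\to x_2\to u$ with the digons $ux_1$ and $x_2x_3$, the pair $B^+_u=\{ux_1,ux_3,x_1x_2\}$ and $B^-_u=\{x_1u,x_2u,x_3x_1\}$ is a good $(u,u)$-pair, and the other pairs (for $S_4$ and for the five-vertex digraph) are handled the same way. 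I expect the main obstacle to be this second step: making sure the list of exceptional \emph{semicomplete split} digraphs is complete — reading off the relevant configurations from the appendix of \cite{ai2024} correctly, and ruling out exceptional configurations padded with additional vertices in $V_1$ or in $V_2$. Once that list is fixed, the first and third steps are routine.
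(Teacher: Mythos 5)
Your overall route is the same as the paper's: invoke Theorem~\ref{thm:2as} so that only the exceptional digraphs (and their arc-reversals, disposed of by the reversal symmetry you note) remain, rule out \ref{ce2} exactly as the paper does (the $V_1$-vertex $u$ and the $V_2$-vertex $c$ would be non-adjacent), reduce \ref{ce1} by semicompleteness to $S_4$ and to configurations already appearing in the appendix of \cite{ai2024}, and finish with a finite verification of good pairs in small digraphs.

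The gap is that the decisive step --- the finite verification for case \ref{ce3} --- is neither delimited nor carried out. Semicompleteness does not whittle the appendix of \cite{ai2024} down to a short afterthought: essentially all of those exceptional digraphs are semicomplete split digraphs on five or six vertices, and exhibiting arc-disjoint branchings for every ordered vertex pair in every one of them is the bulk of the proof. The paper organizes this as five basic five-vertex cases plus all combinations $(e)\times(f)^*$ on six vertices, and avoids raw enumeration by a lifting argument: good pairs of $S_4$ that avoid (or use) the arc $v_4v_2$ are converted into good pairs of the larger digraphs by replacing $v_4v_2$ with the two-path through the added vertex, after which only the pairs $(a,b)$ and $(b,a)$ in the six-vertex combinations need explicit constructions. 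Your proposal compresses all of this into ``routine'' plus one verified pair in $S_4$, so as written it does not establish the statement. A smaller inaccuracy: in case \ref{ce1} with $u\in V_1$ and $|V_1|=2$, the configuration only forces some of the arcs; one must argue, as the paper does, that every completion either has a strong arc decomposition or is again one of the appendix digraphs, rather than asserting that a single explicit five-vertex digraph remains.
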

In this paper, we proved that every $2$-arc-strong semicomplete split digraph $D$ contains a good $(u, v)$-pair for every choice of vertices $u, v$ of $D$, which confirms the Conjecture~\ref{conj:main}. 

\begin{theorem}\label{thm:main}
    Every 2-arc-strong semicomplete split digraph $D$ contains a good $(u,v)$-pair for every choice of vertices $u,v$ of $D$.
\end{theorem}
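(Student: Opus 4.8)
The plan is to reduce Theorem~\ref{thm:main} to a small explicit case check by way of the characterization in Theorem~\ref{thm:2as}. The starting point is the elementary remark that a strong arc decomposition suffices: if $A(D)=A_1\cup A_2$ with $(V,A_1)$ and $(V,A_2)$ both strong, then for any $u,v$ the digraph $(V,A_1)$ contains an out-branching rooted at $u$, the digraph $(V,A_2)$ contains an in-branching rooted at $v$, and these are arc-disjoint. Hence, by Theorem~\ref{thm:2as}, a $2$-arc-strong semicomplete split digraph $D$ lacking a good $(u,v)$-pair for some $(u,v)$ must be isomorphic to one of the exceptional digraphs described in \ref{ce1}, \ref{ce2}, \ref{ce3}, or to the arc-reversal of such a digraph. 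Reversing all arcs of a digraph turns out-branchings rooted at a vertex into in-branchings rooted at that same vertex and preserves arc-disjointness, so $D$ has a good $(u,v)$-pair for all $u,v$ if and only if its arc-reversal does; thus it suffices to treat the non-reversed digraphs of \ref{ce1}--\ref{ce3}.

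The next step is to decide which of these exceptional digraphs can actually be $2$-arc-strong semicomplete split digraphs. The key point is that each of \ref{ce1}--\ref{ce3} prescribes the \emph{entire} out- and in-neighbourhood of several vertices, so those vertices have total degree $3$ or $4$ in $D$. In a semicomplete split digraph with parts $V_1$ (independent) and $V_2$ (semicomplete), every vertex of $V_1$ is adjacent to all of $V_2$, with all its arcs going to $V_2$, and every vertex of $V_2$ is adjacent to all of $V_1$; combined with the degree constraints above and with the fact that $2$-arc-strongness forces each vertex of $V_1$ to have at least two in- and two out-neighbours, this bounds $|V_1|$ and $|V_2|$ by small constants. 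For \ref{ce2} the bound is already contradictory: the vertex $u\in V_1$ there has only three neighbours, while $V_2$ must contain at least four vertices, so no digraph of \ref{ce2} is a semicomplete split digraph. For \ref{ce1} one is left with $n\le 5$, and up to isomorphism the surviving semicomplete split digraphs are the digraph $S_4$ on four vertices and one digraph on five vertices. The same bounding argument, applied to the explicit family of \ref{ce3} from the appendix of~\cite{ai2024}, likewise reduces it to a short list of candidates, each then checked directly for the semicomplete-split and $2$-arc-strong properties.

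It then remains to prove, for each exceptional digraph $D_0$ surviving this analysis --- $S_4$, the five-vertex digraph above, and whatever remains from \ref{ce3} --- that $D_0$ has a good $(u,v)$-pair for \emph{every} ordered pair $(u,v)$ of its vertices, even though it has no strong arc decomposition. Here I would first record the automorphism group of $D_0$ (for instance $S_4$ has a cyclic symmetry of order four), cutting the number of ordered pairs to examine down to a handful, and then for each remaining representative pair write down explicitly an out-branching rooted at $u$ together with an arc-disjoint in-branching rooted at $v$. Since each $D_0$ has only a handful of vertices and arcs, every such verification is a short direct computation.

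The main obstacle is the completeness of the case analysis in the last two steps: one must be certain to list \emph{all} exceptional digraphs that are $2$-arc-strong semicomplete split digraphs --- being attentive to the fact that a given abstract digraph may admit more than one, or no, split partition $(V_1,V_2)$ with $V_1$ independent --- and then cover, without gaps, every ordered vertex pair in each of them. Each individual good-pair construction is easy; the difficulty is the bookkeeping, and the family \ref{ce3} is what makes this the bulk of the work.
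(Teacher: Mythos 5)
Your overall route coincides with the paper's: reduce via Theorem~\ref{thm:2as} to the exceptional digraphs, dispose of arc-reversals by the symmetry of the statement, rule out family \ref{ce2} (your ``only three neighbours of $u$'' argument is the paper's observation that $u\in V_1$ and $c\in V_2$ are non-adjacent), cut family \ref{ce1} down by degree and size constraints much as in Subsection~2.3 of the paper, and finish the remaining finite family by exhibiting good pairs. These reduction steps are correct as stated.

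The genuine gap is that the decisive part is only promised, never carried out. Family \ref{ce3} is a concrete list of $2$-arc-strong semicomplete split digraphs on five and six vertices, and the theorem requires, for each of them, a good $(u,v)$-pair for \emph{every} ordered pair of vertices; your write-up neither identifies these digraphs nor their automorphisms, and exhibits not a single explicit branching pair, dismissing the whole task as ``a short direct computation.'' That is where essentially all of the content lies: the paper spends Subsections~2.1--2.2 on it and needs a structural device to make it tractable --- good pairs of $S_4$ (Lemma~\ref{lem21}) are lifted to the five basic five-vertex cases by exchanging the arc $v_4v_2$ with the path $v_4av_2$ (Theorem~\ref{thm:22}), and then to the six-vertex combinations $(e)\times(f)^*$ by the analogous exchange of $v_3v_1$ with $v_3bv_1$, after which only the pairs $(a,b)$ and $(b,a)$ require fresh constructions. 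Without either such a device or the actual enumeration and tables you allude to, what you have is a plan, not a proof. A smaller imprecision: the claim that case \ref{ce1} leaves, besides $S_4$, exactly ``one digraph on five vertices'' is not justified (the prescribed neighbourhoods do not determine the digraph uniquely); the paper sidesteps this by showing that the five-vertex digraphs arising there belong again to the Appendix family, so they are absorbed by the same verification you have yet to supply.
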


\section{The Proof of the Main Result}

Based on Theorem~\ref{thm:2as}, we need to consider the semicomplete split digraphs with the structure illustrated in \ref{ce1}, \ref{ce2}
and the semicomplete split digraphs in Appendix of \cite{ai2024}.

We first consider the semicomplete split digraphs in Appendix of \cite{ai2024}, and then discuss the semicomplete split digraphs with the structure illustrated in \ref{ce1} and \ref{ce2} of Theorem~\ref{thm:2as}. 

In the first part, the main idea is to consider the common subdigraphs of these cases. By finding special good pairs in $S_4$ and then transforming them into good pairs in $(\overline{i})-(\overline{v})$, we can verify that there is a good $(u,v)$-pair for every choice of $u,v$ in $(\overline{i})-(\overline{v})$. Then based on the results on $(\overline{i})-(\overline{v})$, we can verify that there is a good $(u,v)$-pair for every choice of $u,v$ in the combinations of the five basic cases by checking the pairs $(a,b)$ and $(b,a)$. In the second part, we verify that all the semicomplete split digraphs with structure in \ref{ce1} have been covered by those in the Appendix, and no semicomplete split digraphs with structure in \ref{ce2}. 

For the rest of this paper, we denote an out-branching (resp., in-branching) rooted at $t$ by $t$-out-branching (resp., $t$-in-branching) for simplicity.

\subsection{Five Basic Cases in Appendix}

The following figures show the structures of the five basic cases. We say they are 'basic' since any other cases on $6$ vertices can be somehow viewed as the combinations of these 5 cases.  

\begin{figure}[H]
		\begin{minipage}[t]{0.19\linewidth}
			\vspace{0pt}
			\centering
			\begin{tikzpicture}[scale=0.7]
				\filldraw[black](0,0) circle (3pt)node[label=left:$v_1$](v1){};
				\filldraw[black](2,0) circle (3pt)node[label=right:$v_2$](v2){};
				\filldraw[black](0,-2) circle (3pt)node[label=left:$v_3$](v3){};
				\filldraw[black](2,-2) circle (3pt)node[label=right:$v_4$](v4){};
				
				\filldraw[black](3,-1) circle (3pt)node[label=above:$a$](a){};
				
				\foreach \i/\j/\t in {
					v1/v2/0,
					v2/v3/0,
					v3/v4/0,
					v4/v1/0,
					v1/v3/15,
                        v3/v1/15,     
					v2/v4/0,
					a/v2/15,
					a/v4/15,
					v4/a/15,
					v2/a/15
				}{\path[draw, line width=0.8] (\i) edge[bend left=\t] (\j);}
			\end{tikzpicture}
			\caption*{$(\overline{i})$}
		\end{minipage}
		\hfill
		\begin{minipage}[t]{0.19\linewidth}
			\vspace{0pt}
			\centering
			\begin{tikzpicture}[scale=0.7]
				\filldraw[black](0,0) circle (3pt)node[label=left:$v_1$](v1){};
				\filldraw[black](2,0) circle (3pt)node[label=right:$v_2$](v2){};
				\filldraw[black](0,-2) circle (3pt)node[label=left:$v_3$](v3){};
				\filldraw[black](2,-2) circle (3pt)node[label=right:$v_4$](v4){};
				
				\filldraw[black](3,-1) circle (3pt)node[label=above:$a$](a){};
				
				\foreach \i/\j/\t in {
					v1/v2/0,
					v2/v3/0,
					v3/v4/0,
					v4/v1/0,
					v1/v3/15,
                        v3/v1/15,
					v2/v4/0,
					a/v2/15,
					v2/a/15,
					v4/a/0,
					a/v1/0
				}{\path[draw, line width=0.8] (\i) edge[bend left=\t] (\j);}
			\end{tikzpicture}
			\caption*{$(\overline{ii})$}
		\end{minipage}
		\hfill
		\begin{minipage}[t]{0.19\linewidth}
			\vspace{0pt}
			\centering
			\begin{tikzpicture}[scale=0.7]
				\filldraw[black](0,0) circle (3pt)node[label=left:$v_1$](v1){};
				\filldraw[black](2,0) circle (3pt)node[label=right:$v_2$](v2){};
				\filldraw[black](0,-2) circle (3pt)node[label=left:$v_3$](v3){};
				\filldraw[black](2,-2) circle (3pt)node[label=right:$v_4$](v4){};
				
				\filldraw[black](3,-1) circle (3pt)node[label=above:$a$](a){};
				
				\foreach \i/\j/\t in {
					v1/v2/0,
					v2/v3/0,
					v3/v4/0,
					v4/v1/0,
					v1/v3/15,
                        v3/v1/15,
					v2/v4/0,
					a/v2/0,
					a/v4/15,
					v4/a/15,
					v1/a/0
				}{\path[draw, line width=0.8] (\i) edge[bend left=\t] (\j);}
			\end{tikzpicture}
			\caption*{$(\overline{iii})$}
		\end{minipage}
		\hfill
		\begin{minipage}[t]{0.19\linewidth}
			\vspace{0pt}
			\centering
			\begin{tikzpicture}[scale=0.7]
				\filldraw[black](0,0) circle (3pt)node[label=left:$v_1$](v1){};
				\filldraw[black](2,0) circle (3pt)node[label=right:$v_2$](v2){};
				\filldraw[black](0,-2) circle (3pt)node[label=left:$v_3$](v3){};
				\filldraw[black](2,-2) circle (3pt)node[label=right:$v_4$](v4){};
				
				\filldraw[black](3,-1) circle (3pt)node[label=above:$a$](a){};
				
				\foreach \i/\j/\t in {
					v1/v2/0,
					v2/v3/0,
					v3/v4/0,
					v4/v1/0,
					v1/v3/15,
					v2/v4/0,
                        v3/v1/15,
					a/v2/15,
					v2/a/15,
					v4/a/15,
					a/v3/0
				}{\path[draw,  line width=0.8] (\i) edge[bend left=\t] (\j);}
    
                 \path[draw, dashed, line width=0.8] (v4) edge[bend left=15] (a);
			\end{tikzpicture}
			\caption*{$(\overline{iv})$}
		\end{minipage}
		\hfill
		\begin{minipage}[t]{0.19\linewidth}
			\vspace{0pt}
			\centering
			\begin{tikzpicture}[scale=0.7]
				\filldraw[black](0,0) circle (3pt)node[label=left:$v_1$](v1){};
				\filldraw[black](2,0) circle (3pt)node[label=right:$v_2$](v2){};
				\filldraw[black](0,-2) circle (3pt)node[label=left:$v_3$](v3){};
				\filldraw[black](2,-2) circle (3pt)node[label=right:$v_4$](v4){};
				
				\filldraw[black](3,-1) circle (3pt)node[label=above:$a$](a){};
				
				\foreach \i/\j/\t in {
					v1/v2/0,
					v2/v3/0,
					v3/v4/0,
					v4/v1/0,
					v1/v3/15,
					v2/v4/0,
                        v3/v1/15,
					a/v2/15,
					a/v4/15,
					v4/a/15,
					v3/a/0
				}{\path[draw, line width=0.8] (\i) edge[bend left=\t] (\j);}
				
				\path[draw, dashed, line width=0.8] (v2) edge[bend left=15] (a);
			\end{tikzpicture}
			\caption*{$(\overline{v})$}
		\end{minipage}
	\end{figure}

We need the following lemma. 

    \begin{figure}[H]
		\centering
		\subfigure{\begin{minipage}[t]{0.23\linewidth}
				\centering\begin{tikzpicture}[scale=0.8]
					\filldraw[black](0,0) circle (3pt)node[label=left:$v_1$](v1){};
					\filldraw[black](2,0) circle (3pt)node[label=right:$v_2$](v2){};
					\filldraw[black](0,-2) circle (3pt)node[label=left:$v_3$](v3){};
					\filldraw[black](2,-2) circle (3pt)node[label=right:$v_4$](v4){};
					\foreach \i/\j/\t in {
						v1/v2/0,
						v2/v3/0,
						v3/v4/0,
						v4/v1/0,
						v1/v3/15,
						v2/v4/15,
						v3/v1/15,
						v4/v2/15
					}{\path[draw, line width=0.8] (\i) edge[bend left=\t] (\j);}			
				\end{tikzpicture}\caption*{$S_4$}\end{minipage}}
   \end{figure}

\begin{lemma}\label{lem21}
 Each vertex pair in $S_{4}$ is a good vertex pair.
\end{lemma}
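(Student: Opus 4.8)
The plan is to exploit the fact that $S_4$ is tiny and highly symmetric, so that a single explicit arc‑disjoint branching pair together with one automorphism orbit count does everything. First I would record $S_4$ concretely: its arc set is the directed $4$‑cycle $v_1v_2,v_2v_3,v_3v_4,v_4v_1$ together with the two digons $v_1v_3,v_3v_1$ and $v_2v_4,v_4v_2$, so $S_4$ is $2$‑regular with $8$ arcs. The key structural remark is that $\rho\colon v_i\mapsto v_{i+1}$ (indices modulo $4$) is an automorphism of $S_4$; since an automorphism carries out‑branchings to out‑branchings, in‑branchings to in‑branchings, and preserves arc‑disjointness, the set of good ordered pairs $(v_i,v_j)$ is invariant under $\langle\rho\rangle$. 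The orbits of $\langle\rho\rangle$ on ordered pairs are indexed by $j-i\bmod 4$, so it suffices to exhibit a good $(v_1,v_j)$‑pair for each $j\in\{1,2,3,4\}$ (note this includes the ``diagonal'' case $j=1$, i.e. $u=v$, which the lemma requires).

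The observation that shortens the work is that one out‑branching serves all four targets. I would take $B^+_{v_1}$ to be the Hamiltonian path $v_1\to v_3\to v_4\to v_2$, i.e. the arc set $\{v_1v_3,v_3v_4,v_4v_2\}$. Its complement is the five‑arc set $R=\{v_1v_2,\,v_2v_3,\,v_3v_1,\,v_4v_1,\,v_2v_4\}$, and the remaining step is to check that $R$ contains a spanning in‑tree rooted at $v_j$ for each $j$: concretely $\{v_2v_3,v_3v_1,v_4v_1\}$ works for $j=1$, $\{v_1v_2,v_3v_1,v_4v_1\}$ for $j=2$, $\{v_1v_2,v_2v_3,v_4v_1\}$ for $j=3$, and $\{v_1v_2,v_2v_4,v_3v_1\}$ for $j=4$. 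Each is immediately verified to be an in‑branching (every non‑root vertex has out‑degree one in the set and a directed path to $v_j$), and each is arc‑disjoint from $B^+_{v_1}$ because it is a subset of $R$. This settles the four representatives; applying $\rho^k$ to the good pair $(B^+_{v_1},B^-_{v_j})$ then yields a good pair rooted at $(v_{1+k},v_{j+k})$, and as $k,j$ vary this covers all sixteen ordered pairs, proving the lemma.

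There is no genuine obstacle here — the content is a finite check — but the one place that needs a little care is the choice of $B^+_{v_1}$: it must be picked so that its complement still leaves every vertex, and in particular $v_1$ itself (which has to reach the root of the in‑branching in three of the four cases), enough out‑arcs. A careless choice, such as an out‑branching using both out‑arcs of $v_1$, makes several targets infeasible; this is exactly why I would single out the Hamiltonian‑path out‑branching, whose complement $R$ is ``spread out'' enough to accommodate all four roots simultaneously. The write‑up would therefore consist of: the arc list of $S_4$; the statement that $\rho$ is an automorphism and the resulting reduction to four pairs; the four explicit in‑branchings inside $R$ with a one‑line verification of each; and the concluding transport by $\langle\rho\rangle$.
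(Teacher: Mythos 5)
Your proof is correct and follows essentially the same route as the paper: reduce to four representative ordered pairs via the rotational symmetry of $S_4$ and then exhibit explicit arc-disjoint branchings (your representatives $(v_1,v_1),(v_1,v_2),(v_1,v_3),(v_1,v_4)$ match the paper's orbits, with $(v_1,v_4)$ replacing $(v_2,v_1)$). The only cosmetic difference is your economy of reusing the single out-branching $\{v_1v_3,v_3v_4,v_4v_2\}$ and placing all four in-branchings in its complement, which checks out.
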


\begin{figure}[H]
		\centering
		\subfigure{\begin{minipage}[t]{0.23\linewidth}
				\centering\begin{tikzpicture}[scale=0.8]
					\filldraw[black](0,0) circle (3pt)node[label=left:$v_1$](v1){};
					\filldraw[black](2,0) circle (3pt)node[label=right:$v_2$](v2){};
					\filldraw[black](0,-2) circle (3pt)node[label=left:$v_3$](v3){};
					\filldraw[black](2,-2) circle (3pt)node[label=right:$v_4$](v4){};
					\foreach \i/\j/\t in {
						v1/v2/0,
						v2/v3/0,
						v3/v4/0
					}{\path[draw, line width=0.8, red] (\i) edge[bend left=\t] (\j);}
                        \foreach \i/\j/\t in {
						v2/v4/15,
                        v4/v1/0,
						v3/v1/15
					}{\path[draw, line width=0.8, green] (\i) edge[bend left=\t] (\j);}
				\end{tikzpicture}\caption*{$(v_1,v_1)$ in $S_4$}
    
    \end{minipage}}
		\subfigure{\begin{minipage}[t]{0.23\linewidth}
				\centering\begin{tikzpicture}[scale=0.8]
					\filldraw[black](0,0) circle (3pt)node[label=left:$v_1$](v1){};
					\filldraw[black](2,0) circle (3pt)node[label=right:$v_2$](v2){};
					\filldraw[black](0,-2) circle (3pt)node[label=left:$v_3$](v3){};
					\filldraw[black](2,-2) circle (3pt)node[label=right:$v_4$](v4){};
					\foreach \i/\j/\t in {
						v1/v3/15,
						v3/v4/0,
						v4/v2/15
					}{\path[draw, line width=0.8, red] (\i) edge[bend left=\t] (\j);};
                        \foreach \i/\j/\t in {
						v3/v1/15,
                        v4/v1/0,
						v1/v2/0
					}{\path[draw, line width=0.8, green] (\i) edge[bend left=\t] (\j);}
				\end{tikzpicture}\caption*{$(v_1,v_2)$ in $S_4$}\label{2}\end{minipage}}
        \subfigure{\begin{minipage}[t]{0.23\linewidth}
				\centering\begin{tikzpicture}[scale=0.8]
					\filldraw[black](0,0) circle (3pt)node[label=left:$v_1$](v1){};
					\filldraw[black](2,0) circle (3pt)node[label=right:$v_2$](v2){};
					\filldraw[black](0,-2) circle (3pt)node[label=left:$v_3$](v3){};
					\filldraw[black](2,-2) circle (3pt)node[label=right:$v_4$](v4){};
					\foreach \i/\j/\t in {
						v2/v4/15,
						v4/v1/0,
						v1/v3/15
					}{\path[draw, line width=0.8, red] (\i) edge[bend left=\t] (\j);};
                        \foreach \i/\j/\t in {
						v4/v2/15,
                        v2/v3/0,
						v3/v1/15
					}{\path[draw, line width=0.8, green] (\i) edge[bend left=\t] (\j);}
				\end{tikzpicture}\caption*{$(v_2,v_1)$ in $S_4$}\label{3}\end{minipage}}
        \subfigure{\begin{minipage}[t]{0.23\linewidth}
				\centering\begin{tikzpicture}[scale=0.8]
					\filldraw[black](0,0) circle (3pt)node[label=left:$v_1$](v1){};
					\filldraw[black](2,0) circle (3pt)node[label=right:$v_2$](v2){};
					\filldraw[black](0,-2) circle (3pt)node[label=left:$v_3$](v3){};
					\filldraw[black](2,-2) circle (3pt)node[label=right:$v_4$](v4){};
					\foreach \i/\j/\t in {
						v1/v3/15,
						v3/v4/0,
						v4/v2/15
					}{\path[draw, line width=0.8, red] (\i) edge[bend left=\t] (\j);};
                        \foreach \i/\j/\t in {
						v4/v1/0,
                        v1/v2/0,
						v2/v3/0
					}{\path[draw, line width=0.8, green] (\i) edge[bend left=\t] (\j);}
				\end{tikzpicture}\caption*{$(v_1,v_3)$ in $S_4$}\label{4}\end{minipage}}
	\end{figure}

\begin{proof}
    Since $S_4$ is vertex-transitive, we only need to check the following four vertex pairs. 
  \begin{description}
      \item[$(v_1,v_1)$]:
     $v_1$-out-branching: $\{v_1 v_2, v_2 v_3,v_3 v_4\}$; $v_1$-in-branching: $\{v_2 v_4, v_4 v_1, v_3 v_1\}$.
\item [$(v_1,v_2)$]: $v_1$-out-branching: $\{v_1 v_3,v_3 v_4,v_4 v_2$\}; $v_2$-in-branching: $\{v_3 v_1,v_4 v_1,v_1 v_2\}$.

    \item[$(v_2,v_1)$]: $v_2$-out-branching: $\{v_2 v_4,v_4 v_1,v_1 v_3\}$; $v_1$-in-branching: $\{v_4 v_2,v_2 v_3,v_3 v_1\}$.

    \item[$(v_1,v_3)$]: $v_1$-out-branching: $\{v_1 v_3,v_3 v_4,v_4 v_2\}$; $v_3$-in-branching: $\{v_4 v_1,v_1 v_2,v_2 v_3\}$.
  \end{description}
    So all the vertex pairs in $S_{4}$ are good vertex pairs by symmetry.
     
\end{proof}

Now we are ready to prove that each vertex pair in the five basic cases is a good vertex pair.
 
\begin{theorem}\label{thm:22}
Each vertex pair in the five basic cases is a good vertex pair.    
\end{theorem}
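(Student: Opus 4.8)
The five basic cases $(\overline{i})$--$(\overline{v})$ are all digraphs on the five vertices $\{v_1,v_2,v_3,v_4,a\}$ obtained from $S_4$ (on $\{v_1,v_2,v_3,v_4\}$) by adding a single vertex $a$ together with four arcs joining $a$ to $\{v_1,v_2,v_3,v_4\}$ (with two of the cases having an extra dashed optional arc). So the natural strategy is: start from the good pairs already constructed in Lemma~\ref{lem21}, and \emph{extend} them across the new vertex $a$. Concretely, given a target pair $(s,t)$ with $s,t\in\{v_1,v_2,v_3,v_4\}$, take a good $(s,t)$-pair $(B^+_s,B^-_t)$ in $S_4$ as provided by Lemma~\ref{lem21} (using vertex-transitivity of $S_4$ to reduce to the four pairs listed there), and then attach $a$ to $B^+_s$ by one incoming arc at $a$ and to $B^-_t$ by one outgoing arc from $a$, chosen among the four arcs incident to $a$ so that the two attaching arcs are distinct; the resulting branchings stay arc-disjoint because $S_4$'s twelve arcs are untouched and the two new arcs differ. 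When $s$ or $t$ equals $a$ itself, instead pick a good pair in $S_4$ rooted at the appropriate neighbour of $a$ and prepend/append the relevant arc at $a$.

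**Key steps, in order.** First I would fix, once and for all, a small catalogue of good pairs in $S_4$ for each of the four representative roots-pairs from Lemma~\ref{lem21}, and record for each pair which arcs at each vertex are ``free'' (not used). Second, for each of the five digraphs $(\overline{i})$--$(\overline{v})$ I would read off the actual arc set between $a$ and $\{v_1,\dots,v_4\}$ from the figures: e.g. in $(\overline{i})$ the arcs are $av_2,av_4,v_4a,v_2a$; in $(\overline{ii})$ they are $av_2,v_2a,v_4a,av_1$; and so on. Third, for each such digraph and each pair $(s,t)$ of vertices, I would exhibit an explicit good $(s,t)$-pair by the extension recipe above: choose an arc $\alpha$ into $a$ to splice into the out-branching rooted at $s$, an arc $\beta$ out of $a$ to splice into the in-branching rooted at $t$, with $\alpha\neq\beta$, and verify (a) each $v_i$ still has the right in-/out-degree in each branching and (b) connectivity/acyclicity is preserved. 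Since each digraph has only $\binom{5}{2}\cdot 2 + 5 = 25$ ordered pairs, and many reduce to the $S_4$ cases by the transitivity of $S_4$ on $\{v_1,v_2,v_3,v_4\}$, this is a finite check; I would present it as a short table of branchings per case rather than prose.

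**Main obstacle.** The delicate point is that the five cases are not fully symmetric: the vertex $a$ has different in-/out-neighbourhoods in each, so in some case-and-pair combinations there may be essentially \emph{one} choice of the arc into $a$ or out of $a$, and that forced arc might clash with the arc one wants to use at the other branching, or might force $a$ to be a leaf in a branching whose root is $a$. The real work is checking that no such clash is unavoidable — in particular when $\{s,t\}$ already ``uses up'' the scarce arcs at $a$, or when $a\in\{s,t\}$ so $a$ must be the root of one branching and hence needs an \emph{out}-arc (if $s=a$) or \emph{in}-arc (if $t=a$) to the $S_4$ part with the remaining arc still serving the other branching. I expect that in each of the few tight sub-cases one can instead modify the $S_4$ pair itself (swapping in one of the ``free'' arcs recorded in step one, or choosing a different representative via vertex-transitivity) to free up the needed arc at $a$; handling the dashed optional arcs in $(\overline{iv})$ and $(\overline{v})$ — showing the good pairs exist whether or not that arc is present — is an extra but routine wrinkle. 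The overall proof is therefore a structured but entirely finite case analysis, with the only genuine content being the handful of tight configurations around $a$.
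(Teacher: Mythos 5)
There is a genuine gap, and it sits at the very first step of your plan. The five basic cases are \emph{not} obtained from $S_4$ by adding a vertex $a$ and four arcs: on $\{v_1,v_2,v_3,v_4\}$ they contain only the seven arcs $v_1v_2,v_2v_3,v_3v_4,v_4v_1,v_1v_3,v_3v_1,v_2v_4$, i.e.\ $S_4$ \emph{minus} the arc $v_4v_2$ (incidentally, $S_4$ has eight arcs, not twelve). What all five cases do contain is the path $v_4\,a\,v_2$, so the correct picture is that $v_4v_2$ has been subdivided through $a$, plus two further arcs at $a$. Consequently your central recipe --- take a good pair in $S_4$ from Lemma~\ref{lem21}, leave it untouched, and splice $a$ in with one in-arc and one out-arc --- breaks down whenever the chosen $S_4$ branching uses $v_4v_2$, because that arc simply does not exist in $D$. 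Handling this is not a ``tight sub-case'': it forces the rerouting $v_4v_2\mapsto v_4a,av_2$ inside one branching, which consumes exactly the arcs at $a$ you were planning to use to attach $a$ to the other branching, and then the other branching must reach $a$ through its second in-neighbour (or out-neighbour), which differs among $(\overline{i})$--$(\overline{v})$. This interaction is precisely the content of the paper's proof, which distinguishes whether the $S_4$ branchings contain $v_4v_2$ and, for the pairs with root $a$, has to certify the existence of specific good pairs in $S_4$ whose in-branching (resp.\ out-branching) does or does not contain $v_4v_2$, matched to the actual neighbourhood of $a$ in each case.

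Two further points. First, vertex-transitivity of $S_4$ cannot be used to ``reduce many pairs'' in $D$: the automorphisms of $S_4$ do not extend to $D$ once $a$ is attached asymmetrically, so transitivity is only legitimate inside $S_4$ (as in Lemma~\ref{lem21}); for pairs involving $a$ the choice of which $S_4$ good pair to lift genuinely matters. Second, even granting the corrected framework, your text is a plan with a promised table of $25$ ordered pairs per case rather than a completed verification; the cases you yourself flag as delicate (root $a$, the scarce arcs at $a$, the optional dashed arcs in $(\overline{iv})$ and $(\overline{v})$) are exactly the ones where the work lies, and they are left unresolved. As written, the proposal does not establish Theorem~\ref{thm:22}.
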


\begin{proof}

Suppose that $D$ is one of the basic cases above. Observe that all the five basic cases have a common subdigraph $D_1$. By replacing the $2$-path $v_4 a v_2$ with the arc $v_4v_2$ in $D_1$, we get $S_4$ as the resulting graph. And from Lemma~\ref{lem21}, we know that any vertex pair in $S_4$ is a good vertex pair.

\begin{figure}[H]
        \centering
		\begin{minipage}[t]{0.19\linewidth}
			\vspace{0pt}
			\centering
			\begin{tikzpicture}[scale=0.7]
				\filldraw[black](0,0) circle (3pt)node[label=left:$v_1$](v1){};
				\filldraw[black](2,0) circle (3pt)node[label=right:$v_2$](v2){};
				\filldraw[black](0,-2) circle (3pt)node[label=left:$v_3$](v3){};
				\filldraw[black](2,-2) circle (3pt)node[label=right:$v_4$](v4){};
				
				\filldraw[black](3,-1) circle (3pt)node[label=above:$a$](a){};
				
				\foreach \i/\j/\t in {
					v1/v2/0,
					v2/v3/0,
					v3/v4/0,
					v4/v1/0,
					v1/v3/15,
                        v3/v1/15,     
					v2/v4/0,
					a/v2/15,
					v4/a/15
				}{\path[draw, line width=0.8] (\i) edge[bend left=\t] (\j);}
			\end{tikzpicture}
			\caption*{$(D_1)$}
		\end{minipage}
\end{figure}

We first claim that any vertex pair $(v_j,v_i)$ in $D$ is a good vertex pair for $i,j\in [4]$. 
By the above discussion, we can find arc-disjoint $w$-in-branching $W$ and $t$-out-branching $T$ for any vertex pair $(t,w)$ in $S_4$. If both $W$ and $T$ do not contain $v_4v_2$, then we add $a v_2$ and $v_4 a$ to $W$ and $T$ respectively, to obtain $W'$ and $T'$. It is not hard to see that $W'$ is a $w$-in-branching and $T'$ is a $t$-out-branching in $D_1$, and they are arc-disjoint. If any vertex pair $(v_j,v_i)$ in $D_1$ is a good vertex pair, certainly it is also a good vertex pair in $D$ since $D_1$ is a subdigraph of $D$. If $W$ contains the arc $v_4v_2$, then we replace $v_4v_2$ by $v_4a$ and $a v_2$ to obtain $W'$. Since $a$ has another in-neighbor $v_n$ in $D$, we can add $v_n a$ to $T$ to obtain $T'$. One can check the fact that $T'$ is a $t$-out-branching in $D$, $W'$ is a $w$-in-branching in $D$, and they are arc-disjoint. Similarly, we can discuss the case when the $t$-out-branching contains the arc $v_4v_2$. 

The vertex pair $(a,a)$ is indeed a good vertex pair because we can add $a v_2$ and $v_4 a$, respectively, to the following $v_2$-out-branching and $v_4$-in-branching in $S_4$.

\begin{figure}[H]
		\centering
		\subfigure{\begin{minipage}[t]{0.23\linewidth}
				\centering\begin{tikzpicture}[scale=0.8]
					\filldraw[black](0,0) circle (3pt)node[label=left:$v_1$](v1){};
					\filldraw[black](2,0) circle (3pt)node[label=right:$v_2$](v2){};
					\filldraw[black](0,-2) circle (3pt)node[label=left:$v_3$](v3){};
					\filldraw[black](2,-2) circle (3pt)node[label=right:$v_4$](v4){};
					\foreach \i/\j/\t in {
						v1/v2/0,
						v2/v3/0,
						v3/v4/0
					}{\path[draw, line width=0.8, green] (\i) edge[bend left=\t] (\j);}
                        \foreach \i/\j/\t in {
						v1/v3/15,
						v4/v1/0,
						v2/v4/15
					}{\path[draw, line width=0.8, red] (\i) edge[bend left=\t] (\j);}
				\end{tikzpicture}\caption*{$(v_2,v_4)$ in $S_4$}\end{minipage}}
	\end{figure}

Now we consider the vertex pairs $(a, v_i)$ in $D$, for $i \in [4]$. Suppose that two distinct out-neighbors of $a$ are $v_2$ and $v_m$ in $D$. If we can find a good vertex pair of one of the following two types in $S_4$, then by a similar discussion above, we can transform it to a good vertex pair $(a,v_i)$ in $D$.

type 1: $(v_2,v_i)$ is a good vertex pair in $S_4$ and one of the $v_i$-in-branching does not contain $v_4v_2$;

type 2: $(v_m,v_i)$ is a good vertex pair in $S_4$ and one of the $v_i$-in-branching contains $v_4v_2$;\\
We first show how to transform such a vertex pair to a good vertex pair in $D$, and then verify every $v_i$ is in such a vertex pair for $i \in [4]$. If we have a vertex pair of type 1, then we can find a $v_i$-in-branching $W$ which does not contain $v_4v_2$ and a $v_2$-out-branching $T$ in $S_4$. We add $av_2$ and $av_m$ respectively to $W$ and $T$, and this gives us arc-disjoint $v_i$-in-branching and $a$-out-branching in $D$. Similarly, if we have a vertex pair of type 2, we replace $v_4v_2$ by $v_4a$ and $av_2$ in the $v_i$-in-branching and add $av_m$ to the $v_m$-out-branching in $S_4$. This also gives us arc-disjoint $v_i$-in-branching and $a$-out-branching in $D$. So it suffices to show any $v_i$ is in a vertex pair of type 1 or type 2. Note that $(v_2,v_4)$ is a vertex pair of type 1 since any $v_4$-in-branching could not contain $v_4v_2$ in $S_4$. And $(v_2,v_3)$ and $(v_2,v_2)$ are also vertex pairs of type 1 since we have the following structures in $S_4$:\\

\begin{figure}[H]
		\centering
		\subfigure{\begin{minipage}[t]{0.23\linewidth}
				\centering\begin{tikzpicture}[scale=0.8]
					\filldraw[black](0,0) circle (3pt)node[label=left:$v_1$](v1){};
					\filldraw[black](2,0) circle (3pt)node[label=right:$v_2$](v2){};
					\filldraw[black](0,-2) circle (3pt)node[label=left:$v_3$](v3){};
					\filldraw[black](2,-2) circle (3pt)node[label=right:$v_4$](v4){};
					\foreach \i/\j/\t in {						
						v2/v3/0,
						v3/v4/0,
						v3/v1/15					
					}{\path[draw, line width=0.8, red] (\i) edge[bend left=\t] (\j);}
                        \foreach \i/\j/\t in {
						v2/v4/15,
						v4/v1/0,
						v1/v3/15
					}{\path[draw, line width=0.8, green] (\i) edge[bend left=\t] (\j);}
				\end{tikzpicture}\caption*{$(v_2,v_3)$ in $S_4$}\end{minipage}}
		\subfigure{\begin{minipage}[t]{0.23\linewidth}
				\centering\begin{tikzpicture}[scale=0.8]
					\filldraw[black](0,0) circle (3pt)node[label=left:$v_1$](v1){};
					\filldraw[black](2,0) circle (3pt)node[label=right:$v_2$](v2){};
					\filldraw[black](0,-2) circle (3pt)node[label=left:$v_3$](v3){};
					\filldraw[black](2,-2) circle (3pt)node[label=right:$v_4$](v4){};
					\foreach \i/\j/\t in {
						v2/v4/15,		
						v2/v3/0,
						v3/v1/15
					}{\path[draw, line width=0.8, red] (\i) edge[bend left=\t] (\j);}
                        \foreach \i/\j/\t in {						
						v3/v4/0,						
						v4/v1/0,
						v1/v2/0					
					}{\path[draw, line width=0.8, green] (\i) edge[bend left=\t] (\j);}	
				\end{tikzpicture}\caption*{$(v_2,v_2)$ in $S_4$}\end{minipage}}
	\end{figure}

We now show $(a,v_1)$ is a good vertex pair in $D$, and it suffices to show $(v_m,v_1)$ is a vertex pair of type 2. Note that we have $v_m\in \{v_1,v_3,v_4\}$ in the five basic cases, so we only need to check whether $(v_1,v_1)$, $(v_3,v_1)$ and $(v_4,v_1)$ are all of type 2. In fact, we have the following structures in $S_4$, which means that the three vertex pairs are of type 2:\\

\begin{figure}[H]
         \subfigure{\begin{minipage}[t]{0.23\linewidth}
				\centering\begin{tikzpicture}[scale=0.8]
					\filldraw[black](0,0) circle (3pt)node[label=left:$v_1$](v1){};
					\filldraw[black](2,0) circle (3pt)node[label=right:$v_2$](v2){};
					\filldraw[black](0,-2) circle (3pt)node[label=left:$v_3$](v3){};
					\filldraw[black](2,-2) circle (3pt)node[label=right:$v_4$](v4){};
					\foreach \i/\j/\t in {
						v3/v1/15,
						v4/v2/15,
						v2/v3/0
					}{\path[draw, line width=0.8, green] (\i) edge[bend left=\t] (\j);}
                        \foreach \i/\j/\t in {
						v1/v2/0,						
						v3/v4/0,						
						v1/v3/15
					}{\path[draw, line width=0.8, red] (\i) edge[bend left=\t] (\j);}
				\end{tikzpicture}\caption*{$(v_1,v_1)$ in $S_4$}\end{minipage}}
        \subfigure{\begin{minipage}[t]{0.23\linewidth}
				\centering\begin{tikzpicture}[scale=0.8]
					\filldraw[black](0,0) circle (3pt)node[label=left:$v_1$](v1){};
					\filldraw[black](2,0) circle (3pt)node[label=right:$v_2$](v2){};
					\filldraw[black](0,-2) circle (3pt)node[label=left:$v_3$](v3){};
					\filldraw[black](2,-2) circle (3pt)node[label=right:$v_4$](v4){};
					\foreach \i/\j/\t in {						
						v2/v3/0,
						v3/v1/15,
						v4/v2/15
					}{\path[draw, line width=0.8, green] (\i) edge[bend left=\t] (\j);}
                        \foreach \i/\j/\t in {
						v1/v2/0,
						v3/v4/0,
						v4/v1/0
					}{\path[draw, line width=0.8, red] (\i) edge[bend left=\t] (\j);}
				\end{tikzpicture}\caption*{$(v_3,v_1)$ in $S_4$}\end{minipage}}
               \centering
		\subfigure{\begin{minipage}[t]{0.23\linewidth}
				\centering\begin{tikzpicture}[scale=0.8]
					\filldraw[black](0,0) circle (3pt)node[label=left:$v_1$](v1){};
					\filldraw[black](2,0) circle (3pt)node[label=right:$v_2$](v2){};
					\filldraw[black](0,-2) circle (3pt)node[label=left:$v_3$](v3){};
					\filldraw[black](2,-2) circle (3pt)node[label=right:$v_4$](v4){};
					\foreach \i/\j/\t in {			
						v2/v3/0,					
						v3/v1/15,
						v4/v2/15
					}{\path[draw, line width=0.8, green] (\i) edge[bend left=\t] (\j);}
                        \foreach \i/\j/\t in {
						v1/v2/0,
						v4/v1/0,
						v1/v3/15
					}{\path[draw, line width=0.8, red] (\i) edge[bend left=\t] (\j);}
				\end{tikzpicture}\caption*{$(v_4,v_1)$ in $S_4$}\end{minipage}}
	\end{figure}

As for the vertex pairs $(v_i,a)$ in $D$, in which $i \in [4]$, we suppose two distinct in-neighbors of $a$ are $v_4$ and $v_n$ in $D$. Similarly, we consider the following two types of vertex pairs:

type $1'$: $(v_i,v_4)$ is a good vertex pair in $S_4$ and one of the $v_i$-out-branching does not contain $v_4v_2$;

type $2'$: $(v_i,v_n)$ is a good vertex pair in $S_4$ and one of the $v_i$-out-branching contains $v_4v_2$.\\
We list the required structures below and omit the discussions here since it is very similar to those for vertex pairs $(a,v_i)$ in $D$.

\begin{figure}[H]
		\centering
		\subfigure{\begin{minipage}[t]{0.23\linewidth}
				\centering\begin{tikzpicture}[scale=0.8]
					\filldraw[black](0,0) circle (3pt)node[label=left:$v_1$](v1){};
					\filldraw[black](2,0) circle (3pt)node[label=right:$v_2$](v2){};
					\filldraw[black](0,-2) circle (3pt)node[label=left:$v_3$](v3){};
					\filldraw[black](2,-2) circle (3pt)node[label=right:$v_4$](v4){};
					\foreach \i/\j/\t in {
						v4/v1/0,
						v1/v2/0,
                        v2/v3/0
					}{\path[draw, line width=0.8, red] (\i) edge[bend left=\t] (\j);}
                        \foreach \i/\j/\t in {
						v1/v3/15,
						v3/v4/0,
						v2/v4/15
					}{\path[draw, line width=0.8, green] (\i) edge[bend left=\t] (\j);}
				\end{tikzpicture}\caption*{$(v_4,v_4)$ in $S_4$}\end{minipage}}
		\subfigure{\begin{minipage}[t]{0.23\linewidth}
				\centering\begin{tikzpicture}[scale=0.8]
					\filldraw[black](0,0) circle (3pt)node[label=left:$v_1$](v1){};
					\filldraw[black](2,0) circle (3pt)node[label=right:$v_2$](v2){};
					\filldraw[black](0,-2) circle (3pt)node[label=left:$v_3$](v3){};
					\filldraw[black](2,-2) circle (3pt)node[label=right:$v_4$](v4){};
					\foreach \i/\j/\t in {
						v1/v2/0,
						v2/v4/15,
						v3/v1/15
					}{\path[draw, line width=0.8, red] (\i) edge[bend left=\t] (\j);}
                        \foreach \i/\j/\t in {
						v2/v3/0,
						v3/v4/0,
						v1/v3/15
					}{\path[draw, line width=0.8, green] (\i) edge[bend left=\t] (\j);}
				\end{tikzpicture}\caption*{$(v_3,v_4)$ in $S_4$}\end{minipage}}
	\end{figure}

\begin{figure}[H]
		\centering
		\subfigure{\begin{minipage}[t]{0.23\linewidth}
				\centering\begin{tikzpicture}[scale=0.8]
					\filldraw[black](0,0) circle (3pt)node[label=left:$v_1$](v1){};
					\filldraw[black](2,0) circle (3pt)node[label=right:$v_2$](v2){};
					\filldraw[black](0,-2) circle (3pt)node[label=left:$v_3$](v3){};
					\filldraw[black](2,-2) circle (3pt)node[label=right:$v_4$](v4){};
					\foreach \i/\j/\t in {
						v1/v2/0,
						v2/v3/0,
						v2/v4/15
					}{\path[draw, line width=0.8, red] (\i) edge[bend left=\t] (\j);}
                        \foreach \i/\j/\t in {
						v3/v4/0,
						v1/v3/15,
						v4/v2/15
					}{\path[draw, line width=0.8, green] (\i) edge[bend left=\t] (\j);}
                \end{tikzpicture}\caption*{$(v_1,v_2)$ in $S_4$}\end{minipage}}
		\subfigure{\begin{minipage}[t]{0.23\linewidth}
				\centering\begin{tikzpicture}[scale=0.8]
					\filldraw[black](0,0) circle (3pt)node[label=left:$v_1$](v1){};
					\filldraw[black](2,0) circle (3pt)node[label=right:$v_2$](v2){};
					\filldraw[black](0,-2) circle (3pt)node[label=left:$v_3$](v3){};
					\filldraw[black](2,-2) circle (3pt)node[label=right:$v_4$](v4){};
					\foreach \i/\j/\t in {
						v2/v3/0,
						v3/v1/15,
						v4/v2/15
					}{\path[draw, line width=0.8, green] (\i) edge[bend left=\t] (\j);}		
				
                        \foreach \i/\j/\t in {
						v1/v2/0,
						v1/v3/15,
						v2/v4/15
					}{\path[draw, line width=0.8, red] (\i) edge[bend left=\t] (\j);}
                \end{tikzpicture}\caption*{$(v_1,v_1)$ in $S_4$}\end{minipage}}
        \subfigure{\begin{minipage}[t]{0.23\linewidth}
				\centering\begin{tikzpicture}[scale=0.8]
					\filldraw[black](0,0) circle (3pt)node[label=left:$v_1$](v1){};
					\filldraw[black](2,0) circle (3pt)node[label=right:$v_2$](v2){};
					\filldraw[black](0,-2) circle (3pt)node[label=left:$v_3$](v3){};
					\filldraw[black](2,-2) circle (3pt)node[label=right:$v_4$](v4){};
					\foreach \i/\j/\t in {
						v3/v4/0,
						v1/v3/15,
						v4/v2/15
					}{\path[draw, line width=0.8, red] (\i) edge[bend left=\t] (\j);}
				
                        \foreach \i/\j/\t in {
						v1/v2/0,
						v2/v3/0,
						v4/v1/0
					}{\path[draw, line width=0.8, green] (\i) edge[bend left=\t] (\j);}
                \end{tikzpicture}\caption*{$(v_1,v_3)$ in $S_4$}\end{minipage}}
	\end{figure}
\end{proof}

\subsection{Combinations of the Five Basic Cases in Appendix}

By deleting $v_3 v_1$ in the five basic cases, we get the following structures $(i)-(v)$.\\
\begin{figure}[H]
		\begin{minipage}[t]{0.19\linewidth}
			\vspace{0pt}
			\centering
			\begin{tikzpicture}[scale=0.7]
				\filldraw[black](0,0) circle (3pt)node[label=left:$v_1$](v1){};
				\filldraw[black](2,0) circle (3pt)node[label=right:$v_2$](v2){};
				\filldraw[black](0,-2) circle (3pt)node[label=left:$v_3$](v3){};
				\filldraw[black](2,-2) circle (3pt)node[label=right:$v_4$](v4){};
				
				\filldraw[red](3,-1) circle (3pt)node[label=above:$a$](a){};
				
				\foreach \i/\j/\t in {
					v1/v2/0,
					v2/v3/0,
					v3/v4/0,
					v4/v1/0,
                        v1/v3/0,     
					v2/v4/0
				}{\path[draw, line width=0.8] (\i) edge[bend left=\t] (\j);}
                \foreach \i/\j/\t in {
					a/v4/15,
					a/v2/15,
					v4/a/15,
					v2/a/15
				}{\path[draw, red, line width=0.8] (\i) edge[bend left=\t] (\j);}
			\end{tikzpicture}
			\caption*{$(i)$}
		\end{minipage}
		\hfill
		\begin{minipage}[t]{0.19\linewidth}
			\vspace{0pt}
			\centering
			\begin{tikzpicture}[scale=0.7]
				\filldraw[black](0,0) circle (3pt)node[label=left:$v_1$](v1){};
				\filldraw[black](2,0) circle (3pt)node[label=right:$v_2$](v2){};
				\filldraw[black](0,-2) circle (3pt)node[label=left:$v_3$](v3){};
				\filldraw[black](2,-2) circle (3pt)node[label=right:$v_4$](v4){};
				
				\filldraw[red](3,-1) circle (3pt)node[label=above:$a$](a){};
				
				\foreach \i/\j/\t in {
					v1/v2/0,
					v2/v3/0,
					v3/v4/0,
					v4/v1/0,
					v1/v3/0,
					v2/v4/0
				}{\path[draw, line width=0.8] (\i) edge[bend left=\t] (\j);}
				
				\foreach \i/\j/\t in {
					a/v2/15,
					v2/a/15,
					v4/a/0,
					a/v1/0
				}{\path[draw, red, line width=0.8] (\i) edge[bend left=\t] (\j);}
			\end{tikzpicture}
			\caption*{$(ii)$}
		\end{minipage}
		\hfill
		\begin{minipage}[t]{0.19\linewidth}
			\vspace{0pt}
			\centering
			\begin{tikzpicture}[scale=0.7]
				\filldraw[black](0,0) circle (3pt)node[label=left:$v_1$](v1){};
				\filldraw[black](2,0) circle (3pt)node[label=right:$v_2$](v2){};
				\filldraw[black](0,-2) circle (3pt)node[label=left:$v_3$](v3){};
				\filldraw[black](2,-2) circle (3pt)node[label=right:$v_4$](v4){};
				
				\filldraw[red](3,-1) circle (3pt)node[label=above:$a$](a){};
				
				\foreach \i/\j/\t in {
					v1/v2/0,
					v2/v3/0,
					v3/v4/0,
					v4/v1/0,
					v1/v3/0,
					v2/v4/0
				}{\path[draw, line width=0.8] (\i) edge[bend left=\t] (\j);}
				
				\foreach \i/\j/\t in {
					a/v2/0,
					a/v4/15,
					v4/a/15,
					v1/a/0
				}{\path[draw, red, line width=0.8] (\i) edge[bend left=\t] (\j);}
			\end{tikzpicture}
			\caption*{$(iii)$}
		\end{minipage}
		\hfill
		\begin{minipage}[t]{0.19\linewidth}
			\vspace{0pt}
			\centering
			\begin{tikzpicture}[scale=0.7]
				\filldraw[black](0,0) circle (3pt)node[label=left:$v_1$](v1){};
				\filldraw[black](2,0) circle (3pt)node[label=right:$v_2$](v2){};
				\filldraw[black](0,-2) circle (3pt)node[label=left:$v_3$](v3){};
				\filldraw[black](2,-2) circle (3pt)node[label=right:$v_4$](v4){};
				
				\filldraw[red](3,-1) circle (3pt)node[label=above:$a$](a){};
				
				\foreach \i/\j/\t in {
					v1/v2/0,
					v2/v3/0,
					v3/v4/0,
					v4/v1/0,
					v1/v3/0,
					v2/v4/0
				}{\path[draw, line width=0.8] (\i) edge[bend left=\t] (\j);}
				
				\foreach \i/\j/\t in {
					a/v2/15,
					v2/a/15,
					v4/a/15,
					a/v3/0
				}{\path[draw, red, line width=0.8] (\i) edge[bend left=\t] (\j);}
				
				\path[draw, dashed, red, line width=0.8] (a) edge[bend left=15] (v4);
			\end{tikzpicture}
			\caption*{$(iv)$}
		\end{minipage}
		\hfill
		\begin{minipage}[t]{0.19\linewidth}
			\vspace{0pt}
			\centering
			\begin{tikzpicture}[scale=0.7]
				\filldraw[black](0,0) circle (3pt)node[label=left:$v_1$](v1){};
				\filldraw[black](2,0) circle (3pt)node[label=right:$v_2$](v2){};
				\filldraw[black](0,-2) circle (3pt)node[label=left:$v_3$](v3){};
				\filldraw[black](2,-2) circle (3pt)node[label=right:$v_4$](v4){};
				
				\filldraw[red](3,-1) circle (3pt)node[label=above:$a$](a){};
				
				\foreach \i/\j/\t in {
					v1/v2/0,
					v2/v3/0,
					v3/v4/0,
					v4/v1/0,
					v1/v3/0,
					v2/v4/0
				}{\path[draw, line width=0.8] (\i) edge[bend left=\t] (\j);}
				
				\foreach \i/\j/\t in {
					a/v2/15,
					a/v4/15,
					v4/a/15,
					v3/a/0
				}{\path[draw, red, line width=0.8] (\i) edge[bend left=\t] (\j);}
				
				\path[draw, dashed, red, line width=0.8] (v2) edge[bend left=15] (a);
			\end{tikzpicture}
			\caption*{$(v)$}
		\end{minipage}
	\end{figure}
\noindent Reverse all the arcs in $(i)-(v)$, rotate 180 degrees clockwise, and relabel, we can obtain the corresponding reversed and rotated structures $(i)^*-(v)^*$.

	\begin{figure}[H]
		\begin{minipage}[t]{0.19\linewidth}
			\vspace{0pt}
			\centering
			\begin{tikzpicture}[scale=0.7]
				\filldraw[black](0,0) circle (3pt)node[label=left:$v_1$](v1){};
				\filldraw[black](2,0) circle (3pt)node[label=right:$v_2$](v2){};
				\filldraw[black](0,-2) circle (3pt)node[label=left:$v_3$](v3){};
				\filldraw[black](2,-2) circle (3pt)node[label=right:$v_4$](v4){};
				\filldraw[green](-1,-1) circle (3pt)node[label=above:$b$](b){};
				
				\foreach \i/\j/\t in {
					v1/v2/0,
					v2/v3/0,
					v3/v4/0,
					v4/v1/0,
					v1/v3/0,
					v2/v4/0
				}{\path[draw, line width=0.8] (\i) edge[bend left=\t] (\j);}
				
				\foreach \i/\j/\t in {
					b/v1/15,
					b/v3/15,
					v3/b/15,
					v1/b/15
				}{\path[draw, green, line width=0.8] (\i) edge[bend left=\t] (\j);}
			\end{tikzpicture}
			\caption*{$(i)^*$}
		\end{minipage}
		\hfill
		\begin{minipage}[t]{0.19\linewidth}
			\vspace{0pt}
			\centering
			\begin{tikzpicture}[scale=0.7]
				\filldraw[black](0,0) circle (3pt)node[label=left:$v_1$](v1){};
				\filldraw[black](2,0) circle (3pt)node[label=right:$v_2$](v2){};
				\filldraw[black](0,-2) circle (3pt)node[label=left:$v_3$](v3){};
				\filldraw[black](2,-2) circle (3pt)node[label=right:$v_4$](v4){};
				\filldraw[green](-1,-1) circle (3pt)node[label=above:$b$](b){};
				
				\foreach \i/\j/\t in {
					v1/v2/0,
					v2/v3/0,
					v3/v4/0,
					v4/v1/0,
					v1/v3/0,
					v2/v4/0
				}{\path[draw, line width=0.8] (\i) edge[bend left=\t] (\j);}
				
				\foreach \i/\j/\t in {
					b/v1/0,
					b/v3/15,
					v3/b/15,
					v4/b/0
				}{\path[draw, green, line width=0.8] (\i) edge[bend left=\t] (\j);}
			\end{tikzpicture}
			\caption*{$(ii)^*$}
		\end{minipage}
		\hfill
		\begin{minipage}[t]{0.19\linewidth}
			\vspace{0pt}
			\centering
			\begin{tikzpicture}[scale=0.7]
				\filldraw[black](0,0) circle (3pt)node[label=left:$v_1$](v1){};
				\filldraw[black](2,0) circle (3pt)node[label=right:$v_2$](v2){};
				\filldraw[black](0,-2) circle (3pt)node[label=left:$v_3$](v3){};
				\filldraw[black](2,-2) circle (3pt)node[label=right:$v_4$](v4){};
				\filldraw[green](-1,-1) circle (3pt)node[label=above:$b$](b){};
				
				\foreach \i/\j/\t in {
					v1/v2/0,
					v2/v3/0,
					v3/v4/0,
					v4/v1/0,
					v1/v3/0,
					v2/v4/0
				}{\path[draw, line width=0.8] (\i) edge[bend left=\t] (\j);}
				
				\foreach \i/\j/\t in {
					b/v1/15,
					b/v4/0,
					v1/b/15,
					v3/b/0
				}{\path[draw, green, line width=0.8] (\i) edge[bend left=\t] (\j);}
			\end{tikzpicture}
			\caption*{$(iii)^*$}
		\end{minipage}
		\hfill
		\begin{minipage}[t]{0.19\linewidth}
			\vspace{0pt}
			\centering
			\begin{tikzpicture}[scale=0.7]
				\filldraw[black](0,0) circle (3pt)node[label=left:$v_1$](v1){};
				\filldraw[black](2,0) circle (3pt)node[label=right:$v_2$](v2){};
				\filldraw[black](0,-2) circle (3pt)node[label=left:$v_3$](v3){};
				\filldraw[black](2,-2) circle (3pt)node[label=right:$v_4$](v4){};
				\filldraw[green](-1,-1) circle (3pt)node[label=above:$b$](b){};
				
				\foreach \i/\j/\t in {
					v1/v2/0,
					v2/v3/0,
					v3/v4/0,
					v4/v1/0,
					v1/v3/0,
					v2/v4/0
				}{\path[draw, line width=0.8] (\i) edge[bend left=\t] (\j);}
				
				\foreach \i/\j/\t in {
					b/v1/15,
					b/v3/15,
					v3/b/15,
					v2/b/0
				}{\path[draw, green, line width=0.8] (\i) edge[bend left=\t] (\j);}
				
				\path[draw, dashed, green, line width=0.8] (v1) edge[bend left=15] (b);
			\end{tikzpicture}
			\caption*{$(iv)^*$}
		\end{minipage}
		\hfill
		\begin{minipage}[t]{0.19\linewidth}
			\vspace{0pt}
			\centering
			\begin{tikzpicture}[scale=0.7]
				\filldraw[black](0,0) circle (3pt)node[label=left:$v_1$](v1){};
				\filldraw[black](2,0) circle (3pt)node[label=right:$v_2$](v2){};
				\filldraw[black](0,-2) circle (3pt)node[label=left:$v_3$](v3){};
				\filldraw[black](2,-2) circle (3pt)node[label=right:$v_4$](v4){};
				\filldraw[green](-1,-1) circle (3pt)node[label=above:$b$](b){};
				
				\foreach \i/\j/\t in {
					v1/v2/0,
					v2/v3/0,
					v3/v4/0,
					v4/v1/0,
					v1/v3/0,
					v2/v4/0
				}{\path[draw, line width=0.8] (\i) edge[bend left=\t] (\j);}
				
				\foreach \i/\j/\t in {
					b/v1/15,
					v1/b/15,
					v3/b/15,
					b/v2/0
				}{\path[draw, green, line width=0.8] (\i) edge[bend left=\t] (\j);}
				
				\path[draw, dashed, green, line width=0.8] (b) edge[bend left=15] (v3);
			\end{tikzpicture}
			\caption*{$(v)^*$}
		\end{minipage}
	\end{figure}
 
\noindent We say $D$ is a combination of $(e)$ and $(f)^*$, in which $e,f \in \{i,ii,iii,iv,v\}$, if $D$ is the digraph with $V(D)=\{a,b,v_1,v_2,v_3,v_4\}$ and $A(D) = A((e)) \cup A((f)^*)$, and we denote it by $(e)\times(f)^*$. 

Note that all left cases in Appendix of~\cite{ai2024} can be represented as some $(e)\times(f)^*$, we only need to prove the following theorem to complete the proof for all cases in Appendix of~\cite{ai2024}.

\begin{theorem}
    Each vertex pair in $(e)\times (f)^*$ is a good vertex pair, in which $e,f \in \{i,ii,iii,iv,v\}$.
\end{theorem}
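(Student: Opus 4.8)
The plan is to reduce every ordered vertex pair of $D:=(e)\times(f)^{*}$ — with the sole exceptions of $(a,b)$ and $(b,a)$ — to the basic cases already settled in Theorem~\ref{thm:22} by means of an arc-subdivision argument, and then to treat $(a,b)$ and $(b,a)$ directly. The key tool is the following \emph{subdivision principle}, which is precisely the mechanism used inside the proof of Theorem~\ref{thm:22}. Let $H$ be a digraph in which every vertex pair is a good vertex pair, let $xy\in A(H)$, and let $H'$ be obtained from $H$ by deleting $xy$, adding a new vertex $z$ with the two arcs $xz$ and $zy$, and then adding at least one further in-arc $x'z$ with $x'\in V(H)\setminus\{x\}$ and at least one further out-arc $zy'$ with $y'\in V(H)\setminus\{y\}$. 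Then every pair $(u,v)$ with $u,v\in V(H)$ is a good vertex pair in $H'$.

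To prove this, fix $u,v\in V(H)$ and an arc-disjoint pair consisting of a $u$-out-branching $T$ and a $v$-in-branching $W$ of $H$. The arc $xy$ lies in at most one of $T$ and $W$. If $xy\in T$, then $(T\setminus\{xy\})\cup\{xz,zy\}$ is a $u$-out-branching of $H'$ and $W\cup\{zy'\}$ is a $v$-in-branching of $H'$, and the two are arc-disjoint (the extra arc $zy'$ is needed because $zy$ is already used by the new $T$). If $xy\in W$ the symmetric choice works, adding $x'z$ to $T$ instead. If $xy$ lies in neither, then $T\cup\{xz\}$ and $W\cup\{zy\}$ do the job. This proves the principle.

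Now I would apply it twice. First, $(e)$ is $(\overline{e})$ with the arc $v_3v_1$ removed, and in $(f)^{*}$ the vertex $b$ always has $v_3$ as an in-neighbour and $v_1$ as an out-neighbour and has both in-degree and out-degree at least $2$; hence $D$ contains, as a spanning subdigraph, the digraph obtained from $(\overline{e})$ by subdividing $v_3v_1$ with $b$ and adding $b$'s two further arcs. By Theorem~\ref{thm:22} and the subdivision principle, every pair $(u,v)$ with $u,v\in V(\overline{e})=\{a,v_1,v_2,v_3,v_4\}$ is a good vertex pair in $D$. Second, let $(\overline{f})^{*}$ be the digraph obtained from $(\overline{f})$ by the same operation (reverse all arcs, rotate by $180^{\circ}$, relabel) that produces $(f)^{*}$ from $(f)$; since reversing all arcs carries a good $(u,v)$-pair to a good $(v,u)$-pair and relabelling is a bijection, every vertex pair of $(\overline{f})^{*}$ is again good by Theorem~\ref{thm:22}. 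A direct comparison of arc sets shows that $D$ contains, as a spanning subdigraph, the digraph obtained from $(\overline{f})^{*}$ by subdividing the arc $v_4v_2$ with $a$ and adding $a$'s two further arcs (these exist because in $(e)$ the vertex $a$ always has $v_2$ as an out-neighbour, $v_4$ as an in-neighbour, and in- and out-degree at least $2$). So the principle also yields that every pair $(u,v)$ with $u,v\in V((\overline{f})^{*})=\{b,v_1,v_2,v_3,v_4\}$ is good in $D$. Together, the two applications cover every ordered vertex pair of $D$ except $(a,b)$ and $(b,a)$.

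The remaining, and main, difficulty is to show that $(a,b)$ and $(b,a)$ are good vertex pairs in each of the $25$ digraphs $D=(e)\times(f)^{*}$; here the subdivision argument cannot be used, because $a$ and $b$ are both ``interior'' subdivision vertices and so cannot play the role of a branching root after lifting. Instead I would construct the branchings explicitly. For a good $(a,b)$-pair the guiding idea is to route the $a$-out-branching \emph{through} $b$ — beginning with the arc $av_2$ and using the out-arc $bv_1$ after reaching $b$ through one of its always-present in-arcs — so that several core arcs are left free for the $b$-in-branching, which then collects the remaining vertices, passing some of them through $a$ via the in-arcs $v_2a$ or $v_4a$; the pair $(b,a)$ is handled by the mirror-image construction. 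The amount of casework can be cut roughly in half by the observation that reversing all arcs of $(e)\times(f)^{*}$ and applying the relabelling $a\leftrightarrow b$, $v_1\leftrightarrow v_4$, $v_2\leftrightarrow v_3$ yields $(f)\times(e)^{*}$, so that $(a,b)$ and $(b,a)$ for $(e)\times(f)^{*}$ are equivalent to the same pairs for $(f)\times(e)^{*}$; nevertheless a short case analysis over the admissible extra out/in-neighbours of $a$ in $(e)$ and of $b$ in $(f)^{*}$ seems unavoidable, and that is where the real work of this theorem lies.
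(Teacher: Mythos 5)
Your reduction of all pairs other than $(a,b)$ and $(b,a)$ is correct and is essentially the paper's own argument: the ``subdivision principle'' you formulate (split an arc $xy$ into $xz,zy$, and use an extra in-arc or out-arc at $z$ to repair whichever branching loses or gains an arc) is exactly the lifting the paper performs when it contracts the $2$-path $v_3bv_1$ back to $v_3v_1$ (and, after reversal, $v_4av_2$ back to $v_4v_2$) and invokes Theorem~\ref{thm:22}. Your symmetry observation that reversing all arcs and relabelling turns $(e)\times(f)^*$ into $(f)\times(e)^*$, so that the pairs $(a,b)$ and $(b,a)$ need only be checked up to this exchange, is also the paper's reduction.

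However, the statement is not proved by your proposal, because the part you defer --- showing that $(a,b)$ and $(b,a)$ are good vertex pairs in every combination $(e)\times(f)^*$ --- is precisely the substantive content of the theorem, and ``route the $a$-out-branching through $b$ starting with $av_2$ and using $bv_1$'' is a heuristic, not an argument: whether such arc-disjoint branchings exist depends delicately on which extra in-/out-neighbours $a$ and $b$ have in the particular $(e)$ and $(f)^*$, and no construction or case verification is supplied. The paper closes this gap in two steps: first it exhibits four specific arc-disjoint structures inside $S_4$ which, after lifting through $a$ and $b$, settle $(a,b)$ whenever $D$ contains $(ii)$, $(iv)$ or $(iv)^*$, and $(b,a)$ whenever $D$ contains $(i)$, $(iii)^*$ or $(v)$; this leaves exactly nine ordered pairs (namely $(b,a)$ for $e,f\in\{ii,iv\}$ and $(a,b)$ for $e,f\in\{i,iii,v\}$, up to the reversal symmetry), for each of which the paper displays an explicit arc-disjoint out-/in-branching pair. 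Until you either carry out an equivalent case analysis or produce those explicit branchings, your proof is incomplete at its critical point.
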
 

\begin{proof}
    
Suppose $D$ is the combination of $(e)$ and $(f)^*$.

We first replace the $2$-path $v_3 b v_1$  with $v_3 v_1$ and remove all other arcs that are incident to $b$  in $D$ to obtain a new graph $D'$. By Theorem~\ref{thm:22}, we can find arc-disjoint $w$-in-branching $W$ and $t$-out-branching $T$ for any vertex pair $(t,w)$ in $D'$. If $W$ contains the arc $v_3v_1$, then we replace $v_3v_1$ by $v_3b$ and $b v_1$ to obtain $W'$, observe that $W'$ is a $w$-in-branching in $D$. Since $b$ has another in-neighbor $v_n$ in $D$, we add $v_n b$ to $T$ to obtain $T'$, and obviously  $T'$ is a $t$-out-branching in $D$ and it is arc-disjoint with $W'$. Similarly, we can discuss the case when $t$-out-branching contains the arc $v_3v_1$. So we may assume that $W$ and $T$ do not contain $v_3v_1$. In this case, we can simply add $b v_1$ and $v_3b$ to $W$ and $T$ respectively to obtain a $w$-in-branching and a $t$-out-branching in $D$, and they are arc-disjoint. This means that any vertex pair $(t,w)$ in which $w,t \in \{v_1,v_2,v_3,v_4,a\}$ is a good vertex pair in $D$.

Similarly, we replace the $2$-path $v_4 a v_2$, remove all other arcs that are incident to $a$ in $D$, and reverse all the arcs to obtain a new graph $D'$. Note that reversing all the arcs would only change a good vertex pair $(t,w)$ in the original graph into a good vertex pair $(w,t)$ in the resulting graph, so by analogous discussion as above, we deduce that any vertex pair $(t,w)$ in which $w,t \in \{v_1,v_2,v_3,v_4,b\}$ is a good vertex pair in $D$.

Now the only vertex pairs we need to consider in the following part are $(a,b)$ and $(b,a)$. If $(a,b)$ (resp., $(b,a)$) is a good vertex pair in $(e)\times (f)^*$, then $(a,b)$ (resp., $(b,a)$) is also a good vertex pair in $(f)\times (e)^*$ since $(e)\times (f)^*$ can be transformed into $(f)\times (e)^*$ by reversing all the arcs and relabeling, where $e$ and $f$ are elements of $\{i,ii,iii,iv,v\}$. 

We suppose $a$ (resp., $b$) has two out-neighbors $v_2,v_{m_1}$ (resp.,$v_1$,$v_{m_2}$) and two in-neighbors $v_4,v_{n_1}$ (resp., $v_3$,$v_{n_2}$). We can find the following 4 structures in $S_4$, in which each can be transformed into the desired in-branching and out-branching in $D$. We use Structure 1 to show how to achieve this. Note that $a v_2$ is in every $D$, so if $v_2 b$ is in $D$ (namely, $D$ contains $(iv)^*$), we can add $a v_2$ to the $v_2$-out-branching and replace $v_3 v_1$ by $v_3 b,b v_1$ to obtain an $a$-out-branching in $D$. Then by adding $v_2 b, a v_{m_1}$ to the $v_2$-in-branching, we can get a $b$-in-branching in $D$. So we can always find arc-disjoint $b$-in-branching and $a$-out-branching in any $D$ that contains $(iv)^*$ or $(iv)$. Namely, $(a,b)$ would be a good vertex pair in such $D$. Similarly, we have: Structure 2 implies $(a,b)$ is a good vertex pair in $D$ that contains $(ii)$ and $(b,a)$ is a good vertex pair in $D$ that contains $(v)$, and Structure 3 implies $(b,a)$ is a good  vertex pair in $D$ that contains $(iii)$. Moreover, we can find Structure 4 in any $D$ that contains $(i)$, which implies $(b,a)$ is a good vertex pair in such $D$.

\begin{figure}[H]

 \subfigure{\begin{minipage}[t]{0.23\linewidth}
				\centering\begin{tikzpicture}[scale=0.8]
					\filldraw[black](0,0) circle (3pt)node[label=left:$v_1$](v1){};
					\filldraw[black](2,0) circle (3pt)node[label=right:$v_2$](v2){};
					\filldraw[black](0,-2) circle (3pt)node[label=left:$v_3$](v3){};
					\filldraw[black](2,-2) circle (3pt)node[label=right:$v_4$](v4){};
					\foreach \i/\j/\t in {
						v1/v2/0,
						v4/v1/0,
						v3/v4/0
					}{\path[draw, line width=0.8, red] (\i) edge[bend left=\t] (\j);}
                        \foreach \i/\j/\t in {
						v3/v1/0,
						v2/v3/0,
						v2/v4/15
					}{\path[draw, line width=0.8, green] (\i) edge[bend left=\t] (\j);}
				\end{tikzpicture}\caption*{Structure 1\\$(iv)\,(a,b)$}\end{minipage}}
		\subfigure{\begin{minipage}[t]{0.23\linewidth}
				\centering\begin{tikzpicture}[scale=0.8]
					\filldraw[black](0,0) circle (3pt)node[label=left:$v_1$](v1){};
					\filldraw[black](2,0) circle (3pt)node[label=right:$v_2$](v2){};
					\filldraw[black](0,-2) circle (3pt)node[label=left:$v_3$](v3){};
					\filldraw[black](2,-2) circle (3pt)node[label=right:$v_4$](v4){};
					\foreach \i/\j/\t in {
						v1/v2/0,
						v2/v3/0,
						v3/v4/0
					}{\path[draw, line width=0.8, red] (\i) edge[bend left=\t] (\j);}
                        \foreach \i/\j/\t in {
						v2/v4/0,
                        v4/v1/0,
						v1/v3/15
					}{\path[draw, line width=0.8, green] (\i) edge[bend left=\t] (\j);}
				\end{tikzpicture}\caption*{$\quad \,\,\,$ Structure 2\\$(ii)\,(a,b)$ $\&$ $(v)\, (b,a)$}\end{minipage}}
         \subfigure{\begin{minipage}[t]{0.23\linewidth}
				\centering\begin{tikzpicture}[scale=0.8]
					\filldraw[black](0,0) circle (3pt)node[label=left:$v_1$](v1){};
					\filldraw[black](2,0) circle (3pt)node[label=right:$v_2$](v2){};
					\filldraw[black](0,-2) circle (3pt)node[label=left:$v_3$](v3){};
					\filldraw[black](2,-2) circle (3pt)node[label=right:$v_4$](v4){};
					\foreach \i/\j/\t in {
						v1/v2/0,
						v2/v3/0,
						v4/v1/0
					}{\path[draw, line width=0.8, red] (\i) edge[bend left=\t] (\j);}
                        \foreach \i/\j/\t in {
						v1/v3/15,
                        v3/v4/0,
						v2/v4/0
					}{\path[draw, line width=0.8, green] (\i) edge[bend left=\t] (\j);}
				\end{tikzpicture}\caption*{Structure 3\\$(iii)^*\, (b,a)$}\end{minipage}}
         \subfigure{\begin{minipage}[t]{0.23\linewidth}
				\centering\begin{tikzpicture}[scale=0.8]
					\filldraw[black](0,0) circle (3pt)node[label=left:$v_1$](v1){};
					\filldraw[black](2,0) circle (3pt)node[label=right:$v_2$](v2){};
					\filldraw[black](0,-2) circle (3pt)node[label=left:$v_3$](v3){};
					\filldraw[black](2,-2) circle (3pt)node[label=right:$v_4$](v4){};
                        \filldraw[red](3,-1) circle 
                    (3pt)node[label=right:$a$](a){};
                    
					\foreach \i/\j/\t in {
						v1/v3/0,
						v2/v4/0,
						v3/v4/0,
                            v4/a/15
					}{\path[draw, line width=0.8, red] (\i) edge[bend left=\t] (\j);}
                        \foreach \i/\j/\t in {
						a/v4/15,
                            v2/a/15,
                            v2/v3/0,
						v1/v2/0
					}{\path[draw, line width=0.8, green] (\i) edge[bend left=\t] (\j);}
		\end{tikzpicture}\caption*{Structure 4\\$(i)\,(b,a)$}\end{minipage}}  
    
\end{figure}

So we only need to check the following vertex pairs (total 9 distinct vertex pairs): $(b,a)$ of $(e)\times(f)^*$ in which $e,f\in\{ii,iv\}$ and $(a,b)$ of $(e)\times (f)^*$ in which $e,f\in \{i,iii,v\}$. 

For $(b,a)$, we have the following results:

\begin{figure}[H]
   \centering
		\subfigure{\begin{minipage}[t]{0.23\linewidth}
			\vspace{0pt}
			\centering
			\begin{tikzpicture}[scale=0.8]
				\filldraw[black](0,0) circle (3pt)node[label=left:$v_1$](v1){};
				\filldraw[black](2,0) circle (3pt)node[label=right:$v_2$](v2){};
				\filldraw[black](0,-2) circle (3pt)node[label=left:$v_3$](v3){};
				\filldraw[black](2,-2) circle (3pt)node[label=right:$v_4$](v4){};
				\filldraw[red](-1,-1) circle (3pt)node[label=above:$b$](b){};
				\filldraw[red](3,-1) circle (3pt)node[label=above:$a$](a){};
				\foreach \i/\j/\t in {
					v2/v4/0,
					v2/v3/0,
					v1/v3/0,
                         a/v1/0                         
				}{\path[draw, line width=0.8] (\i) edge[bend left=\t] (\j);}
				
				\foreach \i/\j/\t in {
					b/v3/15,
					v3/v4/0,
					v4/v1/0,
                        v4/a/15,
					a/v2/15
				}{\path[draw, red, line width=0.8] (\i) edge[bend left=\t] (\j);}
                \foreach \i/\j/\t in {
					v3/b/15,
					b/v1/15,
					v1/v2/0,
					v4/b/0,
                        v2/a/15
				}{\path[draw, green, line width=0.8] (\i) edge[bend left=\t] (\j);}
			\end{tikzpicture}
			\caption*{$(ii)^*\times(ii)$}
		\end{minipage}}	
	\subfigure{\begin{minipage}[t]{0.23\linewidth}
			\vspace{0pt}
			\centering
			\begin{tikzpicture}[scale=0.8]
				\filldraw[black](0,0) circle (3pt)node[label=left:$v_1$](v1){};
				\filldraw[black](2,0) circle (3pt)node[label=right:$v_2$](v2){};
				\filldraw[black](0,-2) circle (3pt)node[label=left:$v_3$](v3){};
				\filldraw[black](2,-2) circle (3pt)node[label=right:$v_4$](v4){};
				
				\filldraw[red](3,-1) circle (3pt)node[label=above:$a$](a){};
				\filldraw[red](-1,-1) circle (3pt)node[label=above:$b$](b){};
				\foreach \i/\j/\t in {					
					v4/v1/0,
                    a/v2/15,    v4/b/0,
                v3/b/15
				}{\path[draw, line width=0.8] (\i) edge[bend left=\t] (\j);}
				
				\foreach \i/\j/\t in {
					b/v1/15,
				a/v3/0,	
				v1/v2/0,
                        v2/v4/0,
					v2/a/15
				}{\path[draw, red, line width=0.8] (\i) edge[bend left=\t] (\j);}
                \foreach \i/\j/\t in {
					b/v3/15,
					v3/v4/0,
					v2/v3/0,
				v1/v3/0,	v4/a/15
				}{\path[draw, green, line width=0.8] (\i) edge[bend left=\t] (\j);}
			\end{tikzpicture}
			\caption*{$(ii)^*\times(iv)$}
		\end{minipage}}
		\subfigure{\begin{minipage}[t]{0.23\linewidth}
			\vspace{0pt}
			\centering
			\begin{tikzpicture}[scale=0.8]
				\filldraw[black](0,0) circle (3pt)node[label=left:$v_1$](v1){};
				\filldraw[black](2,0) circle (3pt)node[label=right:$v_2$](v2){};
				\filldraw[black](0,-2) circle (3pt)node[label=left:$v_3$](v3){};
				\filldraw[black](2,-2) circle (3pt)node[label=right:$v_4$](v4){};
				\filldraw[red](-1,-1) circle (3pt)node[label=above:$b$](b){};
				\filldraw[red](3,-1) circle (3pt)node[label=above:$a$](a){};
				
				\foreach \i/\j/\t in {
					v2/b/0,
					v4/v1/0,			
					a/v2/15,
                        v3/b/15     
				}{\path[draw, line width=0.8] (\i) edge[bend left=\t] (\j);}
				
				\foreach \i/\j/\t in {
					b/v1/15,
					v1/v2/0,
					v2/a/15,
					v2/v4/0,
                        a/v3/0
				}{\path[draw, red, line width=0.8] (\i) edge[bend left=\t] (\j);}
                \foreach \i/\j/\t in {
					b/v3/15,
					v1/v3/0,
					v2/v3/0,
					v3/v4/0,
                        v4/a/15
				}{\path[draw, green, line width=0.8] (\i) edge[bend left=\t] (\j);}
			\end{tikzpicture}
			\caption*{$(iv)^*\times(iv)$}
		\end{minipage}}
  		
	\end{figure}

For $(a,b)$, we have the following results:
\begin{figure}[H]
\centering
       \subfigure{\begin{minipage}[t]{0.23\linewidth}
			\vspace{0pt}
			\centering
			\begin{tikzpicture}[scale=0.8]
				\filldraw[black](0,0) circle (3pt)node[label=left:$v_1$](v1){};
				\filldraw[black](2,0) circle (3pt)node[label=right:$v_2$](v2){};
				\filldraw[black](0,-2) circle (3pt)node[label=left:$v_3$](v3){};
				\filldraw[black](2,-2) circle (3pt)node[label=right:$v_4$](v4){};
				\filldraw[red](-1,-1) circle (3pt)node[label=above:$b$](b){};
				\filldraw[red](3,-1) circle (3pt)node[label=above:$a$](a){};
				
				\foreach \i/\j/\t in {
					v4/a/15,
					v1/v2/0,
					v1/v3/0,
					b/v3/15
				}{\path[draw, line width=0.8] (\i) edge[bend left=\t] (\j);}
				
				\foreach \i/\j/\t in {
					a/v2/15,
					v2/v4/0,
					v2/v3/0,
					v3/b/15,
                        b/v1/15
				}{\path[draw, red, line width=0.8] (\i) edge[bend left=\t] (\j);}
				
				\foreach \i/\j/\t in {
					v2/a/15,
					a/v4/15,
					v3/v4/0,
					v4/v1/0,
                        v1/b/15
				}{\path[draw, green, line width=0.8] (\i) edge[bend left=\t] (\j);}
			
			\end{tikzpicture}
			\caption*{$(i)^*\times(i)$}
		\end{minipage}}	
		\subfigure{\begin{minipage}[t]{0.23\linewidth}
			\vspace{0pt}
			\centering
			\begin{tikzpicture}[scale=0.8]
				\filldraw[black](0,0) circle (3pt)node[label=left:$v_1$](v1){};
				\filldraw[black](2,0) circle (3pt)node[label=right:$v_2$](v2){};
				\filldraw[black](0,-2) circle (3pt)node[label=left:$v_3$](v3){};
				\filldraw[black](2,-2) circle (3pt)node[label=right:$v_4$](v4){};
				
				\filldraw[red](3,-1) circle (3pt)node[label=above:$a$](a){};
				\filldraw[red](-1,-1) circle (3pt)node[label=above:$b$](b){};
				\foreach \i/\j/\t in {
					v1/v2/0,
					v2/a/15,
					v1/v3/0,
					v4/a/15
				}{\path[draw, line width=0.8] (\i) edge[bend left=\t] (\j);}
				
				\foreach \i/\j/\t in {
					a/v2/15,
					v2/v3/0,
					v3/b/15,
					b/v1/15,
                        b/v4/0
				}{\path[draw, red, line width=0.8] (\i) edge[bend left=\t] (\j);}
                \foreach \i/\j/\t in {
					a/v4/15,
					v2/v4/0,
					v4/v1/0,
					v3/v4/0,
                        v1/b/15
				}{\path[draw, green, line width=0.8] (\i) edge[bend left=\t] (\j);}
			\end{tikzpicture}
			\caption*{$(iii)^*\times(i)$}
		\end{minipage}}
        		\subfigure{\begin{minipage}[t]{0.23\linewidth}
			\vspace{0pt}
			\centering
			\begin{tikzpicture}[scale=0.8]
				\filldraw[black](0,0) circle (3pt)node[label=left:$v_1$](v1){};
				\filldraw[black](2,0) circle (3pt)node[label=right:$v_2$](v2){};
				\filldraw[black](0,-2) circle (3pt)node[label=left:$v_3$](v3){};
				\filldraw[black](2,-2) circle (3pt)node[label=right:$v_4$](v4){};
				\filldraw[red](-1,-1) circle (3pt)node[label=above:$b$](b){};
				\filldraw[red](3,-1) circle (3pt)node[label=above:$a$](a){};
				
				\foreach \i/\j/\t in {
					v4/a/15,
					v1/v3/0,
					v1/a/0
				}{\path[draw, line width=0.8] (\i) edge[bend left=\t] (\j);}
				
				\foreach \i/\j/\t in {
					a/v2/15,
					v2/v3/0,
					v3/b/15,
					b/v1/15,
                        v1/v2/0,
                        b/v4/0
				}{\path[draw, red, line width=0.8] (\i) edge[bend left=\t] (\j);}
				
				\foreach \i/\j/\t in {
					a/v4/15,
					v4/v1/0,
					v2/v4/0,
					v3/v4/0,
                        v1/b/15
	}{\path[draw, green, line width=0.8] (\i) edge[bend left=\t] (\j);}
			\end{tikzpicture}
			\caption*{$(iii)^*\times(iii)$}
		\end{minipage}}
		\subfigure{\begin{minipage}[t]{0.23\linewidth}
			\vspace{0pt}
			\centering
			\begin{tikzpicture}[scale=0.8]
				\filldraw[black](0,0) circle (3pt)node[label=left:$v_1$](v1){};
				\filldraw[black](2,0) circle (3pt)node[label=right:$v_2$](v2){};
				\filldraw[black](0,-2) circle (3pt)node[label=left:$v_3$](v3){};
				\filldraw[black](2,-2) circle (3pt)node[label=right:$v_4$](v4){};
				\filldraw[red](-1,-1) circle (3pt)node[label=above:$b$](b){};
				\filldraw[red](3,-1) circle (3pt)node[label=above:$a$](a){};
				\foreach \i/\j/\t in {
					v2/a/15,
					v2/v4/0,
					v3/v4/0,
					b/v1/15			
				}{\path[draw, line width=0.8] (\i) edge[bend left=\t] (\j);}
				
				\foreach \i/\j/\t in {
					a/v4/15,
					v4/v1/0,
					v1/v3/0,
					v1/b/15,
                        b/v2/0
				}{\path[draw, red, line width=0.8] (\i) edge[bend left=\t] (\j);}
                \foreach \i/\j/\t in {
					v4/a/15,
					a/v2/15,
					v2/v3/0,
					v3/b/15,
                        v1/v2/0
				}{\path[draw, green, line width=0.8] (\i) edge[bend left=\t] (\j);}
			\end{tikzpicture}
			\caption*{$(v)^*\times(i)$}
		\end{minipage}}	
		\subfigure{\begin{minipage}[t]{0.23\linewidth}
			\vspace{0pt}
			\centering
			\begin{tikzpicture}[scale=0.8]
				\filldraw[black](0,0) circle (3pt)node[label=left:$v_1$](v1){};
				\filldraw[black](2,0) circle (3pt)node[label=right:$v_2$](v2){};
				\filldraw[black](0,-2) circle (3pt)node[label=left:$v_3$](v3){};
				\filldraw[black](2,-2) circle (3pt)node[label=right:$v_4$](v4){};
				\filldraw[red](-1,-1) circle (3pt)node[label=above:$b$](b){};
				\filldraw[red](3,-1) circle (3pt)node[label=above:$a$](a){};
				
				\foreach \i/\j/\t in {
					v3/v4/0,
					v4/v2/0,
					b/v1/15,				
					v1/a/0	
				}{\path[draw, line width=0.8] (\i) edge[bend left=\t] (\j);}
				
				\foreach \i/\j/\t in {
					a/v4/15,
					v4/v1/0,
					v1/b/15,
					b/v2/0,
                        v1/v3/0
				}{\path[draw, red, line width=0.8] (\i) edge[bend left=\t] (\j);}
                \foreach \i/\j/\t in {
					v3/b/15,
					v2/v3/0,
					v1/v2/0,
					v4/a/15,
                        a/v2/15
				}{\path[draw, green, line width=0.8] (\i) edge[bend left=\t] (\j);}
			\end{tikzpicture}
			\caption*{$(v)^*\times(iii)$}
		\end{minipage}}
		\subfigure{\begin{minipage}[t]{0.23\linewidth}
			\vspace{0pt}
			\centering
			\begin{tikzpicture}[scale=0.8]
				\filldraw[black](0,0) circle (3pt)node[label=left:$v_1$](v1){};
				\filldraw[black](2,0) circle (3pt)node[label=right:$v_2$](v2){};
				\filldraw[black](0,-2) circle (3pt)node[label=left:$v_3$](v3){};
				\filldraw[black](2,-2) circle (3pt)node[label=right:$v_4$](v4){};
				\filldraw[red](-1,-1) circle (3pt)node[label=above:$b$](b){};
				\filldraw[red](3,-1) circle (3pt)node[label=above:$a$](a){};
				
				\foreach \i/\j/\t in {
					v2/v4/0,
					v3/v4/0,
					v3/a/0,
					b/v1/15
				}{\path[draw, line width=0.8] (\i) edge[bend left=\t] (\j);}
				
				\foreach \i/\j/\t in {
					a/v4/15,
					v4/v1/0,
                        v1/b/15,
                        b/v2/0,
                        v1/v3/0
				}{\path[draw, red, line width=0.8] (\i) edge[bend left=\t] (\j);}
				
				\foreach \i/\j/\t in {
					v4/a/15,
					a/v2/15,
					v2/v3/0,
					v3/b/15,
                        v1/v2/0
				}{\path[draw, green, line width=0.8] (\i) edge[bend left=\t] (\j);}

			\end{tikzpicture}
			\caption*{$(v)^*\times(v)$}
		\end{minipage}}
		
	\end{figure}
We are done now.

\end{proof}

\subsection{Semicomplete Split Digraph with Specific Structure}

If $D$ has structures illustrated in Theorem~\ref{thm:2as}\ref{ce2}, then observing that $u\in V_1,c\in V_2$, and $uc,cu\notin A$, we have $D$ is not a semicomplete split digraph.

If $D$ has structures illustrated in Theorem~\ref{thm:2as}\ref{ce1}, and $D$ is semicomplete split digraph, then:
\begin{enumerate}
    \item When $u\in V_1$: Since all arcs incident to $u$ has been illustrated and $D$ is a semicomplete split digraph, we have $V_2=\{x_1,x_2,x_3\}$. If there is another vertex $t\in V_1\setminus\{u\}$, then since $D$ is 2-arc-strong, $t$ has at least 2 in-arcs, which implies $t$ can only be $v$. 
    \begin{itemize}
        \item If $V_1=\{u\}$, then since $D$ is 2-arc-strong semicomplete digraph, it means either $D$ has strong arc decomposition or $D$ is $S_4$. 
        

        \item If $V_1=\{u,v\}$, then  $x_3x_1,x_2x_1\in A(D)$ since $N_D^+(x_1)=\{x_2,u\}, N_D^+(x_2)=\{v,u\}$. Since $x_3$ has at least 2 in-arcs, we have $vx_3\in A(D)$. This structure has been discussed in the Appendix of~\cite{ai2024}, the result of which is either $D$ has strong arc decomposition or $D$ is a counterexample listed in Appendix, and we have proved that $D$ always has good $(u,v)$-pair for all $u,v\in D$.

        \begin{figure}[H]
            \centering
            \begin{tikzpicture}[scale=0.7]
				\filldraw[black](0,0) circle (3pt)node[label=left:$u$](v1){};
				\filldraw[black](2,0) circle (3pt)node[label=right:$x_3$](v2){};
				\filldraw[black](0,-2) circle (3pt)node[label=left:$x_1$](v3){};
				\filldraw[black](2,-2) circle (3pt)node[label=right:$x_2$](v4){};
				
				\filldraw[black](3,-1) circle (3pt)node[label=above:$v$](a){};
				
				\foreach \i/\j/\t in {
					v1/v2/0,
					v3/v4/0,
					v4/v1/0,
					v1/v3/15,
                        v3/v1/15, 
					v4/a/0
				}{\path[draw, line width=0.8] (\i) edge[bend left=\t] (\j);}

                    \foreach \i/\j/\t in {
					v2/v3/0,
					v2/v4/0,
                        a/v2/0
				}{\path[draw, line width=0.8, red] (\i) edge[bend left=\t] (\j);}
		\end{tikzpicture}
        \end{figure}
    \end{itemize}

    \item When $u\in V_2$: Since $u$ is only adjacent to $x_1,x_2,x_3$, and the subdigraph induced by $V_2$ is semicomplete, then we have $|V_2|\leq 4$. Every vertex in $V_1$ is adjacent to $u$ since $D$ is semicomplete split digraph and so we have $V_1\cup V_2=\{u,x_1,x_2,x_3\}$.
    \begin{itemize}
        \item If $|V_2|=1$, then $x_1x_2\in D$ contradicts the facts that $V_1$ is an independent set.

        \item If $|V_2|=2$, then at least two of $x_1,x_2,x_3$ belongs to $V_1$. Since $D$ is 2-arc-strong, each vertex $x$ in $V_1$ has at least 2 in-arcs and 2 out-arcs, which means $x$ has 2-cycle with $u$. And so there would be two 2-cycles incident to $u$, which contradicts the structures as illustrated. 
        
        \item If $|V_2|=3,4$, then $D$ is 2-arc-strong semicomplete digraph, which means either $D$ has strong arc decomposition or $D$ is $S_4$. 

    \end{itemize}
\end{enumerate}
So, we are done.

\section{Remarks}
In this paper, we have considered good pairs in semicomplete split digraphs. Based on the results from~\cite{ai2024}, we only need to examine a finite number of graphs. In addition to the proofs above, we also verified all the possibilities with the assistance of a computer, and we have included the code in the Appendix.

\section{Appendix}
\begin{lstlisting}[language = R,breaklines = true]

install.packages("igraph")
library(igraph)
set.seed(124)

# main functions 
check_in_out_branching= function(edge_set,D,sample_size,details=F){
  vertex_set = unique(edge_set)
  l = length(vertex_set)
  check_matrix = matrix(0,nrow = l,ncol = l)
  colnames(check_matrix) = vertex_set
  rownames(check_matrix) = vertex_set
  tmp <- tempfile(pattern="image",tmpdir=".",fileext = ".jpg")
  jpeg(tmp, width=1000*l, height=1000*l)
  par(mfrow = c(l,l),cex = 3)
  for(i in 1:l){
    for(j in 1:l){
        for(k in 1:sample_size){
          t_ = subgraph.edges(D,sample_spanning_tree(D))
          t_check = dominator_tree(t_,vertex_set[i],mode = "in")
          if(length(t_check$leftout)==0){
            D_ = difference(D,t_)
            t_out = dominator_tree(D_,vertex_set[j],mode = "out")
            if(length(t_out$leftout)==0){
              check_matrix[i,j]=1
              if(details==T){
                print(plot(t_check$domtree,main =paste0(vertex_set[i],"-in-branching")))
                print(plot(t_out$domtree,main =paste0(vertex_set[j],"-out-branching")))
              }
              break
            }
          }
        } 
    }
  }
  dev.off()
  return(check_matrix)}    #details(default: False): whether to create a new .JPG file to show all the in-branchings and out-branchings

judge_in_out = function(check_matrix){
  a = nrow(check_matrix)
  all(check_matrix)!=0)
}


#input all the cases that should be checked
common_set = c("v1","v2","v2","v4","v3","v4","v1","v3","v4","v1","v2","v3")
a1_type1 = c("v2","a1","a1","v2","a1","v4","v4","a1")
a1_type2 = c("v2","a1","a1","v2","a1","v1","v4","a1")
a1_type3 = c("a1","v2","v1","a1","a1","v4","v4","a1")
a1_type4 = c("v2","a1","a1","v2","a1","v3","v4","a1")
a1_type5 = c("a1","v2","v3","a1","a1","v4","v4","a1")

a2_type1 = c("a2","v1","v1","a2","a2","v3","v3","a2")
a2_type2 = c("a2","v1","v4","a2","v3","a2","a2","v3")
a2_type3 = c("v1","a2","a2","v1","a2","v4","v3","a2")
a2_type4 = c("a2","v1","v2","a2","a2","v3","v3","a2")
a2_type5 = c("a2","v1","v1","a2","a2","v2","v3","a2")

edge_set1 = c(common_set,"v3","v1",a1_type1)

edge_set2 = c(common_set,"v3","v1",a1_type2)

edge_set3 = c(common_set,"v3","v1",a1_type3)

edge_set4 = c(common_set,"v3","v1",a1_type4)

edge_set5 = c(common_set,"v3","v1",a1_type5)

edge_set6 = c(common_set,a1_type1,a2_type1)

edge_set7 = c(common_set,a1_type1,a2_type2)

edge_set8 = c(common_set,a1_type1,a2_type3)

edge_set9 = c(common_set,a1_type1,a2_type4)

edge_set10 = c(common_set,a1_type2,a2_type2)

edge_set11 = c(common_set,a1_type2,a2_type3)

edge_set12 = c(common_set,a1_type2,a2_type4)

edge_set13 = c(common_set,a1_type3,a2_type3)

edge_set14 = c(common_set,a1_type3,a2_type4)

edge_set15 = c(common_set,a1_type3,a2_type5)

edge_set16 = c(common_set,a1_type4,a2_type4)



for(i in 1:16){
  edge_set = get(paste0("edge_set",as.character(i)))
  D = graph(edges = edge_set ,directed = T)
  check_matrix = check_in_out_branching(edge_set,D,2000)
  print(check_matrix)
  print(judge_in_out(check_matrix))
}


\end{lstlisting}


\begin{thebibliography}{1}

		\bibitem{aiDM3}
		J.~Ai, S.~Gerke, G.~Gutin, S.~Wang, A.~Yeo, and Y.~Zhou.
		\newblock On Seymour's and Sullivan's second neighbourhood conjectures.
		\newblock{\em J. Graph Theory.}, 105 (2024), no. 3, 413–426.

        \bibitem{ai2024}
    J.~Ai, F.~He, Z.~Li, Z.~Qin, and C.~Wang.
    \newblock A complete characterization of split digraphs with a strong arc decomposition.
    \newblock {\em arXiv preprint}, arXiv:2408.02260, 2024.
    \newblock URL: \url{https://arxiv.org/abs/2408.02260}.
	
    \bibitem{BG096}
		J. Bang-Jensen, S. Bessy, F. Havet, and Y. Anders. 
		\newblock{Arc-disjoint in- and out-branchings in digraphs of independence number at most 2.}
		\newblock{\em J. Graph Theory}, 100 (2022), no. 2, 294–314.


		\bibitem{BG09}
		J. Bang-Jensen and G. Gutin. 
		\newblock{Digraphs-Theory, Algorithms and Applications, 2nd Ed..}
		Springer Monographs in Mathematics, London: Springer-Verlag London, Ltd., 2009.
	
		\bibitem{bangJGT95}
		J. Bang-Jensen, G. Gutin and A. Yeo. 
		\newblock{Arc-disjoint strong spanning subdigraphs of semicomplete compositions.}
		\newblock{\em J. Graph Theory}, 95(2):267--289, 2020.


		\bibitem{bangJCTB102}
		J. Bang-Jensen and J. Huang. 
		\newblock{Decomposing locally semicomplete digraphs into strong spanning subdigraphs.}
		\newblock{\em J. Combin. Theory Ser. B}, 102:701--714, 2010.

		\bibitem{bang20241}
		J. Bang-Jensen and Y. Wang.
		\newblock{Arc-disjoint out- and in-branchings in compositions of digraphs.}
		\newblock{\em J. European J. Combin}, 120 (2024), Paper No. 103981, 28 pp.

            \bibitem{bang202421}
		J. Bang-Jensen and Y. Wang.
		\newblock{Arc-disjoint out-branchings and in-branchings in semicomplete digraphs.}
		\newblock{\em J. Graph Theory}, 106 (2024), no. 1, 182–197.
  
  
  
  
  
  \bibitem{bang2024}
		J. Bang-Jensen and Y. Wang.
		\newblock{Strong arc decomposition of split digraphs.}
		\newblock{\em J. Graph Theory}, 2024, https://doi.org/10.1002/jgt.23157.

		\bibitem{bangCOM24}
		J. Bang-Jensen and A. Yeo.
		\newblock{Decomposition $k$-arc-strong tournaments into strong spanning subdigraphs.}
		\newblock{\em Combinatorica}, 24(3):331--349, 2004.
	
		\bibitem{thomassen1989}
		C.~Thomassen.
		\newblock {Configurations in graphs of large minimum degree, connectivity, or
			chromatic number}.
		\newblock {\em Annals of the New York Academy of Sciences}, 555:402--412, 1989.

    



\end{thebibliography}
\end{document}